\newtheorem{theorem}{Theorem}
\newtheorem{lemma}[theorem]{Lemma}
\newtheorem{proposition}[theorem]{Proposition}
\newtheorem{remark}[theorem]{Remark}
\numberwithin{equation}{section}
\newcommand\lbd{\underline{\mbox{\rm dim}}_{\it B}\,} % lower box dimension
\newcommand\bi{{\bf i}}
\newcommand\ri{{\rm i}}
\newcommand\E{\mathbb{E}}
\renewcommand\P{\mathbb{P}}
\begin{document}

\title{Dimension conservation for self-similar sets and fractal percolation} 

\author{Kenneth Falconer}
\address{Mathematical Institute, University of St Andrews, North Haugh, St Andrews, Fife, KY16 9SS, United Kingdom}              
\email{kjf@st-andrews.ac.uk}

\author{Xiong Jin}
\address{School of Mathematics, Alan Turing Building, The University of Manchester,
Manchester, M13 9PL, United Kingdom
}
\email{xiong.jin@manchester.ac.uk}

\begin{abstract}
We introduce a technique that uses projection properties of fractal percolation to establish dimension conservation results for sections of deterministic self-similar sets. For example, let $K$ be a self-similar subset of $\mathbb{R}^2$ with Hausdorff dimension $\dim_H K >1$ such that the rotational components of the underlying similarities generate the full rotation group. Then for all $\epsilon >0$, writing $\pi_\theta$ for projection onto the line $L_\theta$ in direction $\theta$, the Hausdorff dimensions of the sections satisfy $\dim_H (K\cap \pi_\theta^{-1}x)> \dim_H K - 1 - \epsilon$  for a set of $x \in L_\theta$ of positive Lebesgue measure, for all directions $\theta$ except for those in a set of Hausdorff dimension 0. For a class of self-similar sets we obtain a similar conclusion for all directions, but with lower box dimension replacing Hausdorff dimensions of sections. We obtain similar inequalities for the dimensions of sections of Mandelbrot percolation sets.
\end{abstract}

\maketitle

\section{Introduction}
\setcounter{theorem}{0}

Relating the Hausdorff dimension $\dim_H K$ of a set $K \subset \mathbb{R}^d$ to the dimensions of its sections and projections has a long history. The best-known result on projections is that, if $K$ is Borel or analytic, then, writing 
$\pi_V: \mathbb{R}^d \to V$ for  orthogonal projection onto the subspace $V$, \begin{equation}\label{dimproj}
\dim_H \pi_V K=\min(k,\dim_H K),
\end{equation}
 for almost all $k$-dimensional subspaces $V$ (with respect to the natural invariant measure on subspaces). For sections of sets, for almost all $k$-dimensional subspaces $V$, the dimensions of the {\it sections} or {\it slices}  $\pi_V^{-1}x \cap K$ of $K$ satisfy
\begin{equation*}\label{dimsec}
\dim_H (K \cap \pi_V^{-1}x) \leq \max(0,\dim_H K - k)
\end{equation*}
 for Lebesgue almost all $x \in V$ (we take $\dim_H \emptyset =-\infty$).
  Moreover, for all $\epsilon>0$ and almost all $V$, there is a set $W_\epsilon \subset V$ of positive $k$-dimensional Lebesgue measure such that
\begin{equation}\label{dimsec1}
\dim_H (K \cap \pi_V^{-1}x) \geq \max(0,\dim_H K - k) - \epsilon
\end{equation}
 for $x\in W_\epsilon$.
These inequalities were obtained by Marstrand \cite{Mar54} for subsets of the plane, and extended to general $d$ and $k$ by Mattila \cite{Mat75}. Kaufman \cite{Kau68} introduced the potential theoretic method which is now commonly used in studying dimensions of projections and sections of sets.
 
 These properties are  complemented by the fact \cite{Mar54a} that, for \textit{all} $k$-dimensional subspaces $V$, for all $0 \leq \Delta \leq d-k$,
\begin{equation*}\label{dimcons}
\Delta + \dim_H \{x \in V: \dim_H (K\cap \pi_V^{-1}x) \geq \Delta\} \leq \dim_H K.
\end{equation*}
In particular, if $\dim_H K> k$ then for all $V$
$$\dim_H (K\cap \pi_V^{-1}x)\leq \dim_H K - k$$
for Lebesgue almost all $x\in V$.
A good exposition of this material may be found in \cite{Mat95}.

Fursternberg \cite{Fur} introduced the notion of dimension conservation: given $K \subset \mathbb{R}^d$, a projection $\pi_V$ is said to be \textit{dimension conserving for} $K$ if there is a number $\Delta >0$ such that 
\begin{equation}
\Delta + \dim_H \{x \in V: \dim_H (K\cap \pi_V^{-1}x) \geq \Delta\} \geq \dim_H K \label{dimcon} 
\end{equation} 
In this paper we consider a slightly weaker property when $\dim_H K > k$. We say that a projection $\pi_V$ is {\it weakly dimension conserving} if, for all $\epsilon >0$,
\begin{equation}\label{weakdimcon}
\dim_H (K\cap \pi_V^{-1}x)> \dim_H K - k - \epsilon\quad \mbox{ for all } x\in W,
\end{equation}
where $W$ is a `large' subset of $V$, either with $\dim_H W = k$ or with ${\mathcal L}^k(W) >0$, where  ${\mathcal L}^k$ denotes $k$-dimensional Lebesgue measure.
It follows from (\ref{dimsec1}) that $\pi_V$ is weakly dimension conserving for almost every $k$-dimensional subspace $V$.

There has been great interest recently in identifying classes of sets, in particular classes of self-similar sets and their variants, for which these various inequalities
hold for {\it all}, rather than just almost all, subspaces. Several papers establish (\ref{dimproj}) for all  projections for classes of self-similar sets \cite{Fur,HoSh12,PS,SS} and for percolation on self-similar sets \cite{FJ,RS,RS2,RS3,SV}. Here we consider dimensions of sections, and identify sets for which (\ref{weakdimcon}), or a similar inequality for box-counting dimension, holds for all subspaces $V$.

Recall that an {\it iterated function system} (IFS) $\mathcal{I}=\{f_i\}_{i=1}^{m}$ on $\mathbb{R}^d$ is a family of $2 \leq m < \infty$ contractions $f_i : \mathbb{R}^d\to \mathbb{R}^d$. An IFS determines a unique non-empty compact  $K \subset \mathbb{R}^d$ such that 
\begin{equation}\label{attractor}
K= \bigcup_{i=1}^m f_i(K),
\end{equation}
called the {\it attractor} of the IFS, see \cite{Falconer14,Hut81}. If the $f_i$ are all similarities then $K$ is {\it self-similar}.
The IFS satisfies the {\it strong separation  condition} (SSC) if the union (\ref{attractor}) is disjoint, and the {\it open set condition} (OSC) if there is a non-empty open set $U$ such that $\cup_{i=1}^m f_i(U)\subset U$ with this union disjoint. If either SSC or OSC hold then $\dim_H K = s$ where $s$ is given by $\sum_{i=1}^m r_i^s=1$, where $r_i$ is the similarity ratio of $f_i$. 

We may write an IFS of (orientation preserving)  similarities as 
\begin{equation*}\label{IFS}
 \mathcal{I}=\{f_i=r_iR_i\cdot+a_i\}_{i=1}^{m}
\end{equation*}
where $R_i \in SO(d,\mathbb{R})$ is a rotation, $r_i$ is the scaling ratio and $a_i$ is a translation. If the group $G$ generated by $\{R_1,\ldots,R_m\}$ is dense in $SO(d,\mathbb{R})$ we say that the IFS has {\it dense rotations}.

A number of results on dimension conservation of self-similar sets have been established.
Furstenberg \cite{Fur} showed that (\ref{dimcon}) holds for projections onto all subspaces $V$ for a class of  `homogeneous' sets. These include self-similar sets where the IFS 
$\mathcal{I}$ consists of contracting homotheties (i.e. similarities without rotation or reflection so that $R_i = I$ for all $i$) that satisfy SSC or OSC. For example, variants on the Sierpi\'{n}ski carpet are of this type, where the value of $\Delta$ in (\ref{dimcon}) depends on  the subspace $V$. There are detailed analyses of sections of the Sierpi\'{n}ski carpet  in \cite{MS,LXZ} and of sections of the Sierpi\'{n}ski gasket or triangle in \cite{BFS}.
In the case where the IFS ${\mathcal I}$ satisfies OSC and the group  generated by $\{R_1,\ldots,R_m\}$ is finite, then every projection is dimension conserving, that is for all $V$ (\ref{dimcon}) holds for some number $\Delta$, see \cite{FJ,Ho12}.

In this paper we demonstrate that many self-similar sets $K$ are weakly dimension conserving for  all, or virtually all, projections $\pi_V$. For self-similar sets in $\mathbb{R}^2$ where  ${\mathcal I}$ satisfies OSC and has dense rotations and $\dim_HK > 1$, (\ref{weakdimcon}) holds with $\mathcal{L}(W)>0$ for all $\epsilon>0$ and for projections onto all lines $V$, except for lines in a set of directions of Hausdorff dimension 0  (Theorem \ref{thmhdgen}). Provided that we replace Hausdorff dimension by lower box dimension  on the left-hand side of the inequality  we get (\ref{weakdimcon}) for all lines, for a large class of sets that satisfy a projection condition   (Theorem \ref{dimconthm}).
We also show that, almost surely,  (\ref{weakdimcon}) is true for all $k$-dimensional subspaces $V$ for random subsets  of $\mathbb{R}^d$ obtained by the Mandelbrot percolation process  (Theorem \ref{mandper}).

The idea is to demonstrate weak dimension conservation for a deterministic set $K$ by running a percolation-type process on $K$ to `probe' the dimensions of its sections. We construct   random sets $K^\omega\subset K$ such that
$k<\dim K^\omega < k+\epsilon/2$ with positive probability. Writing $L_x$ for the $(d-k)$-plane through $x$ and perpendicular to $V$, if $\dim (K\cap L_x) < \dim K - k -\epsilon$ for some $x \in V$ there is a high probability  that  $K^\omega\cap L_x =\emptyset$ or equivalently that $x\not \in \pi_V K^\omega$. By invoking results on projections of random sets that show that with positive probability $\dim \pi_V K^\omega = k$, we conclude that there must be a significant subset of $x \in V$, indeed a subset of dimension $k$, for which this does not occur.

We formulate this principle in a general context in Propositions \ref{prop1} and \ref{prop2}. To apply it in various settings we utilise results on dimensions of projections of percolation sets from \cite{FJ,RS,RS2,RS3}. Theorems \ref{thmhd} and \ref{thmhdgen} depend on the absolute continuity of projections of an alternative type of random measure, and this is established in Theorem \ref{absss} which is a random version of a deterministic result of Shmerkin and Solomyak \cite{SS}.

The authors are grateful to Mike Hochman for comments on an earlier version of this paper.

\section{Estimates for dimensions of sections using random subsets}\label{sec2}
\setcounter{theorem}{0}

In this section we present a general formulation of our method for obtaining lower bounds for the dimensions of sections of a set given a knowledge of the dimensions of projections of related random subsets.  
The method applies to sets that can be modeled in terms of an infinite rooted tree. These include self-similar sets, where the tree provides a natural description of the hierarchical construction of the set, but extends to a many further fractals.

Let $\Lambda=\{1,\ldots,m\}$ be an alphabet of $m\ge 2$ symbols, with  $ \Lambda^n$  denoting the set of words of length $n \geq 0$. Let $\Sigma_*: =\cup_{n\ge 0} \Lambda^n$ be the set of finite words and 
$\Sigma :=\Lambda^\mathbb{N}$ the corresponding {\it symbolic space} of all infinite words. For each $\mathbf{i}\in \Sigma_*$ denote by $[\mathbf{i}]\subset \Sigma$ the set of infinite words that start with $\mathbf{i}$, that is the {\it cylinder} rooted at $\mathbf{i}$. We denote the diameter of a set $A\subset \mathbb{R}^d$ by $|A|$.

We consider fractals which are the image of a subset of symbolic space under a  continuous mapping $\Phi:\Sigma \mapsto \mathbb{R}^d$ with the usual metrics. For each $\mathbf{i}\in \Sigma_*$ we write $B(\Phi[\mathbf{i}])$ for the closed convex hull of $\Phi[\mathbf{i}]$. We also assume throughout that there is a number
$d_0>0$ such that 
\begin{equation*}\label{fat}
\frac{\mbox{inradius } B(\Phi[\mathbf{i}])}{\mbox{diameter } B(\Phi[\mathbf{i}])}\geq d_0 \quad
\mbox{ for all } \mathbf{i}\in \Sigma_*;
\end{equation*}
thus the convex hulls cannot get `too long and thin'.
We assume throughout that $\Phi$ satisfies  the following two conditions:
\medskip

\begin{itemize}
\item[(1)] There exist $0<c_0, c_1<\infty$ such that for all $\rho\in(0,c_0)$, the set
\begin{equation}\label{lambdar}
\Lambda_\rho=\{\mathbf{i}\in \Sigma_*:  \rho\le |\Phi[\mathbf{i}]|< c_1\rho\}
\end{equation}
yields a finite covering of $\Sigma$, that is $\#\Lambda_\rho<\infty$ and $\Sigma=\cup_{\mathbf{i}\in \Lambda_\rho} [\mathbf{i}]$; 
\medskip

\item[(2)] There exists an integer $n_0$ such that for all $\rho\in(0,c_0)$ and $x\in \mathbb{R}^n$,
\begin{equation}\label{overlap}
\#\{\mathbf{i}\in \Lambda_\rho: x\in B(\Phi[\mathbf{i}])\}\le n_0.
\end{equation}
\end{itemize}
These conditions will certainly be satisfied if $\Phi$ codes the attractor of an IFS satisfying OSC.
 
We may define measures  of Hausdorff type on subsets of $\Phi(\Sigma)$ by setting, for all $s>0$, $F \subset \Phi(\Sigma)$ and $\delta >0$,
\begin{equation}\label{mesdel}
{\mathcal M}^s_\delta (F) = \inf\Big\{\sum_{j=1}^\infty\big|\Phi[\bi_j]\big|^s :\Phi^{-1}(F) \subset 
\bigcup_{j=1}^\infty [\bi_j],\  \big|\Phi[\bi_j]\big|\leq \delta \Big\}
\end{equation}
and
\begin{equation*}
{\mathcal M}^s (F) =\lim_{\delta\searrow 0} {\mathcal M}^s_\delta (F).
\end{equation*}
Then ${\mathcal M}^s$ is equivalent to the restriction of $s$-dimensional Hausdorff measure ${\mathcal M}^s$ to $\Phi(\Sigma)$. Clearly ${\mathcal H}^s(F) \leq {\mathcal M}^s(F)$ for $F \subset \Phi(\Sigma)$. For the opposite inequality (to within a constant multiple), note that  the number of sets $\Phi[\bi]$ with $\bi \in \Lambda_\rho$ that overlap $U\cap \Phi(\Sigma)$ is bounded for all $U \subset \mathbb{R}^n$ with $|U| = \rho<c_0$, from comparing the volumes of maximal inscribed balls  of $B(\Phi[\mathbf{i}])$ with that of some ball centered in $U$ of radius $|U|$ and using (\ref{overlap}). In particular, $\dim_H F= \inf\{s: {\mathcal M}^s (F) = 0\} =  \sup\{s: {\mathcal M}^s (F) = \infty\}$ for $F \subset \Phi(\Sigma)$.

In a similar way,  (\ref{lambdar}) and (\ref{overlap}) imply that the box-counting dimension of subsets of $\Phi(\Sigma)$ may be found by counting cylinders. In particular, the lower box-counting dimension of  $F \subset \Phi(\Sigma)$ is given by 
\begin{equation}\label{lbd}
\lbd F = \varliminf_{\rho \to 0} \frac{\log \{\# \bi \in\Lambda_\rho: F \cap B(\Phi[\bi])\neq 0\}}{-\log \rho}.
\end{equation}
Let $\mathcal{B}_\Sigma$ be the $\sigma$-field generated by the cylinders of $\Sigma$. Let $\mathbb{P}$ be a probability measure on $\mathcal{B}_\Sigma$. Let $\Sigma^\omega$ be a random subset of $\Sigma$ and let
\[
\Sigma_*^\omega : =\{\mathbf{i}\in \Sigma_*: [\mathbf{i}]\cap \Sigma^\omega\neq \emptyset\}.
\]
We adopt the convention that $A^\omega :=A\cap \Sigma^\omega$ if $A$ is a subset of $\Sigma$ and $A^\omega :=A\cap \Sigma_*^\omega$ if $A$ is a subset of $\Sigma_*$.

For $\alpha\ge 0$ we say that $\Sigma^\omega$ is an $\alpha${\it -random subset of  $\Sigma$} if there exists a constant $c_2<\infty$ such that for all $\rho\in(0,c_0)$ and all $\mathbf{i}\in \Lambda_\rho$,
\begin{equation}\label{prob}
\mathbb{P}(\mathbf{i}\in \Lambda^\omega_\rho)\le c_2 \rho^\alpha.
\end{equation}
For our applications, $\Sigma^\omega$ will typically be the symbolic set underlying fractal percolation on $K$, so that  $\Phi (\Sigma^\omega) = K^\omega$. 

Let $V$ be a $k$-dimensional subspace of $\mathbb{R}^d$ and let $\pi_V: \mathbb{R}^d\to V$ denote orthogonal projection onto $V$.  Write ${\mathcal L}^k$ for $k$-dimensional  Lebesgue measure on $V$ identified with $\mathbb{R}^k$ in the obvious way. (If $k=1$ then $V$ is a line and we write ${\mathcal L}$ for   Lebesgue measure on $V$.) 

The following two propositions are our principal tools.  The first, which concerns the Hausdorff measure of sections, has stronger hypotheses on the projection of the random subset but a weaker condition on the projection of the original set, than the second which concerns the lower box dimension of sections.

\begin{proposition}\label{prop1}
Let $A\in \mathcal{B}_\Sigma$. Let $\Sigma^\omega$ be an $\alpha$-random subset of $\Sigma$ for some $\alpha>0$, let $\Phi:\Sigma \to \mathbb{R}^d$ satisfy $(1)$ and $(2)$ above, and let $V$ be a $k$-dimensional subspace of $\mathbb{R}^d$.
If $\mathbb{P}\big({\mathcal L}^k( \pi_V(\Phi(A^\omega)))>0\big)>0$, then 
$$\mathcal{L}^k\big\{x\in V: \dim_H \big(\Phi(A)\cap \pi_V^{-1}(x)\big)\ge \alpha\big\}>0.$$
\end{proposition}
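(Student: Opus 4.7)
My plan is a contradiction argument. Suppose the conclusion fails, so that the set
\[
E_\alpha := \bigl\{x\in V:\dim_H(\Phi(A)\cap \pi_V^{-1}(x))\ge \alpha\bigr\}
\]
has $\mathcal{L}^k(E_\alpha)=0$. I would show this forces $\mathcal{L}^k(\pi_V(\Phi(A^\omega)))=0$ almost surely, contradicting the hypothesis. For each $x\notin E_\alpha$ we have $\dim_H(\Phi(A)\cap \pi_V^{-1}(x))<\alpha$, and since $\mathcal{M}^\alpha$ defined in \eqref{mesdel} is equivalent to Hausdorff measure on $\Phi(\Sigma)$, this means $\mathcal{M}^\alpha(\Phi(A)\cap \pi_V^{-1}(x))=0$. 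So for any $\eta>0$ we may select cylinders $\{[\bi_j^x]\}_{j\ge 1}$ covering $\Phi^{-1}(\Phi(A)\cap \pi_V^{-1}(x))$ with $\sum_j |\Phi[\bi_j^x]|^\alpha<\eta$; setting $\rho_j := |\Phi[\bi_j^x]|$ places $\bi_j^x\in \Lambda_{\rho_j}$ as in \eqref{lambdar}.

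The key pointwise step is a simple union bound. If $x\in \pi_V(\Phi(A^\omega))$ on some event, then some $\bi\in A\cap \Sigma^\omega$ satisfies $\Phi(\bi)\in \pi_V^{-1}(x)$; in particular $\bi\in [\bi_j^x]$ for some $j$, which forces $\bi_j^x\in \Lambda^\omega_{\rho_j}$. The $\alpha$-random hypothesis \eqref{prob} then gives
\[
\mathbb{P}\bigl(x\in \pi_V(\Phi(A^\omega))\bigr)\le \sum_j \mathbb{P}\bigl(\bi_j^x\in \Lambda^\omega_{\rho_j}\bigr)\le c_2\sum_j \rho_j^\alpha = c_2 \sum_j |\Phi[\bi_j^x]|^\alpha < c_2\eta.
\]
Since $\eta>0$ is arbitrary, $\mathbb{P}(x\in \pi_V(\Phi(A^\omega)))=0$ for every $x\notin E_\alpha$, hence for $\mathcal{L}^k$-a.e. $x\in V$.

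Finally I invoke Fubini--Tonelli:
\[
\mathbb{E}\bigl[\mathcal{L}^k(\pi_V(\Phi(A^\omega)))\bigr]=\int_V \mathbb{P}\bigl(x\in \pi_V(\Phi(A^\omega))\bigr)\,d\mathcal{L}^k(x)=0,
\]
so $\mathcal{L}^k(\pi_V(\Phi(A^\omega)))=0$ almost surely, contradicting the hypothesis. The main technical nuisance, rather than a genuine obstacle, is justifying the Fubini step: one must verify that $(\omega,x)\mapsto \mathbf{1}_{\{x\in \pi_V(\Phi(A^\omega))\}}$ is jointly measurable. Since $A$ is Borel, $\Phi$ and $\pi_V$ are continuous and $\Sigma^\omega$ is a random Borel (or analytic) set, $\pi_V(\Phi(A^\omega))$ is analytic, hence universally measurable, in $V$, which suffices.
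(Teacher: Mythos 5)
Your proof is correct and follows essentially the same route as the paper's: cover the fibre over each $x$ with small section dimension by cylinders of small $\alpha$-sum, use the $\alpha$-random bound \eqref{prob} and a first-moment/union bound to conclude $\Phi(A^\omega)\cap\pi_V^{-1}(x)=\emptyset$ almost surely for such $x$, then apply Fubini. The only difference is presentational — you argue by contradiction where the paper argues directly that $\pi_V(\Phi(A^\omega))$ must, with positive probability, have positive measure outside the bad set — and your remark on joint measurability is a reasonable way to handle a point the paper leaves implicit.
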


\noindent{\it Proof.}
Let 
$$S = \big\{x \in V:\dim_H \big(\Phi(A)\cap  \pi_V^{-1}(x)\big) < \alpha\big\}.$$
Let $x \in S$. Using (\ref{mesdel}), for all $\epsilon >0$ we may find a set  of words ${\mathcal J}\subset \Sigma_*$ such that 
$\Phi^{-1}\big(\Phi(A)\cap  \pi_V^{-1}(x)\big) \subset  \bigcup_{\bi \in \mathcal J}[\bi]$ and
$\sum_{\bi \in \mathcal J}\big|\Phi[\bi]\big|^\alpha <\epsilon$.
Then  $\Phi(A^\omega)\cap  \pi_V^{-1}(x) \subset \bigcup_{\bi \in {\mathcal J}\cap  \Sigma_*^\omega}\Phi[\bi]$ and
$$\E\big( \#\{\bi \in {\mathcal J}\cap  \Sigma_*^\omega\}\big)
 = \sum_{\bi \in {\mathcal J}}\P\big( \bi \in \Sigma_*^\omega\big) 
 \leq c_2 \sum_{\bi \in {\mathcal J}}\big|\Phi[\bi]\big|^\alpha < c_2 \epsilon,$$
using (\ref{prob}), so $\mathbb{P}\big( \{\bi \in {\mathcal J}\cap  \Sigma_*^\omega\} \neq \emptyset\big) <c_2 \epsilon$. Since $\epsilon$ is arbitrarily small, we conclude that  for all $x \in S$, $\Phi(A^\omega) \cap \pi_V^{-1}(x) = \emptyset$ almost surely. 

By Fubini's theorem, almost surely 
$${\mathcal L}^k \big(S\cap \pi_V(\Phi(A^\omega))\big)={\mathcal L}^k \big(x \in S :\Phi(A^\omega)\cap  \pi_V^{-1}(x) \neq 0\big) = 0.$$ Hence, with positive probability,
$$0<  {\mathcal L}^k \big(\pi_V(\Phi(A^\omega))\big)={\mathcal L}^k \big(\pi_V(\Phi(A^\omega))\setminus S\big) 
\leq {\mathcal L}^k \big(\pi_V(\Phi(A))\setminus S\big).\qed$$  
\medskip

The second general proposition concerns the lower box-counting dimension of sections of sets. Here we require a condition that, for all $\bi\in\Sigma_*$,  the projection of $\Phi [\bi]$ onto the subspace $V$ is the same as that of its convex hull; in particular this will be the case if $\Phi [\bi]$ is connected.

\begin{proposition}\label{prop2}
Let $\Sigma^\omega$ be an $\alpha$-random subset of $\Sigma$ for some $\alpha>0$, let $\Phi:\Sigma \to \mathbb{R}^d$ satisfy $(1)$ and $(2)$ above, and let $V$ be a line, that is a $1$-dimensional subspace of $\mathbb{R}^d$. Suppose that the projection of $\Phi [\bi]$ onto $V$ is the same as that of its convex hull $B(\Phi [\bi])$ for all $\bi\in\Sigma_*$. If $\mathbb{P}(\dim_H \pi_V(\Phi(\Sigma^\omega))=1)>0$, then for every $\epsilon\in(0,\alpha)$, 
$$\dim_H \big\{x\in V:\lbd\big( \Phi(\Sigma)\cap \pi_V^{-1}(x)\big)> \alpha - \epsilon\big\}=1.$$
\end{proposition}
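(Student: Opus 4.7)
The plan is to mirror the argument of Proposition \ref{prop1}, working at dyadic scales, and then upgrade from ``positive Lebesgue measure of the complement'' to ``full Hausdorff dimension of the complement''. Set $N_\rho(x) := \#\{\bi \in \Lambda_\rho : x \in \pi_V(\Phi[\bi])\}$. The hypothesis that $\pi_V(\Phi[\bi]) = \pi_V(B(\Phi[\bi]))$ combined with (\ref{lbd}) yields
$$\lbd\big(\Phi(\Sigma) \cap \pi_V^{-1}(x)\big) = \liminf_{n\to\infty} \frac{\log N_{2^{-n}}(x)}{n\log 2},$$
so the ``bad'' set $T_\epsilon := \{x \in V : \lbd (\Phi(\Sigma) \cap \pi_V^{-1}(x)) \le \alpha - \epsilon\}$ is contained in $\limsup_n B_n$, where $B_n := \{x \in V : N_{2^{-n}}(x) \le 2^{n(\alpha - \epsilon/2)}\}$. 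Repeating the probability calculation from the proof of Proposition \ref{prop1} at scale $2^{-n}$, for every $x \in B_n$,
$$\P\big(x \in \pi_V(\Phi(\Sigma^\omega))\big) \le \P\big(\exists \bi \in \Lambda_{2^{-n}}^\omega : x \in \pi_V(\Phi[\bi])\big) \le c_2 N_{2^{-n}}(x) 2^{-n\alpha} \le c_2 2^{-n\epsilon/2}.$$

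To turn this into a Hausdorff-dimension bound rather than merely a Lebesgue-null statement, I would fix $s \in (1 - \epsilon/2, 1)$ and estimate the expected $s$-dimensional Hausdorff content. Covering $\pi_V(\Phi(\Sigma^\omega)) \cap B_n$ by $\{\pi_V(\Phi[\bi]) : \bi \in \Lambda_{2^{-n}}^\omega,\ \pi_V(\Phi[\bi]) \cap B_n \neq \emptyset\}$ and taking expectations using the $\alpha$-randomness (\ref{prob}) gives
$$\E\,\mathcal{H}^s_\infty\big(\pi_V(\Phi(\Sigma^\omega)) \cap B_n\big) \le c_1^s\, 2^{-ns}\cdot c_2\, 2^{-n\alpha}\cdot M_n,$$
with $M_n := \#\{\bi \in \Lambda_{2^{-n}} : \pi_V(\Phi[\bi]) \cap B_n \neq \emptyset\}$. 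The technical heart is the bound on $M_n$: the strategy is to use Fubini on the $c_1 2^{-n}$-neighbourhood of $B_n$,
$$\sum_\bi \mathcal{L}\big(\pi_V(\Phi[\bi]) \cap B_n^{(c_1 2^{-n})}\big) = \int_{B_n^{(c_1 2^{-n})}} N_{2^{-n}}(y)\,dy,$$
splitting the integral into a bulk contribution on $B_n$ (bounded by $2^{n(\alpha - \epsilon/2)}\mathcal{L}(B_n)$ by definition of $B_n$) and a boundary-shell contribution on $B_n^{(c_1 2^{-n})}\setminus B_n$ (controlled via the universal pointwise bound $N_\rho(y) \lesssim \rho^{-(d-1)}$, obtained by packing inscribed balls of radius $d_0\rho$ inside a $c_1\rho$-slab around the hyperplane $\pi_V^{-1}(y)$ using the fat condition and (\ref{overlap})). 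The lower length bound $|\pi_V(\Phi[\bi])| \ge 2 d_0 2^{-n}$ then converts the sum of measures into a bound on $M_n$ and, after some book-keeping, gives $\E\mathcal{H}^s_\infty(\pi_V(\Phi(\Sigma^\omega)) \cap B_n) \lesssim 2^{-n\eta}$ for some $\eta > 0$.

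Summing over $n$, $\sigma$-subadditivity of $\mathcal{H}^s_\infty$ together with Borel--Cantelli yields $\mathcal{H}^s(\pi_V(\Phi(\Sigma^\omega)) \cap T_\epsilon) = 0$ almost surely, whence $\dim_H(\pi_V(\Phi(\Sigma^\omega)) \cap T_\epsilon) \le s < 1$ a.s. On the positive-probability event $\{\dim_H \pi_V(\Phi(\Sigma^\omega)) = 1\}$, the decomposition $\pi_V(\Phi(\Sigma^\omega)) = (\pi_V(\Phi(\Sigma^\omega)) \cap T_\epsilon) \cup (\pi_V(\Phi(\Sigma^\omega)) \setminus T_\epsilon)$ combined with finite stability of Hausdorff dimension forces $\dim_H(\pi_V(\Phi(\Sigma^\omega)) \setminus T_\epsilon) = 1$; since this set lies inside $V \setminus T_\epsilon$, we conclude $\dim_H(V \setminus T_\epsilon) = 1$. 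The main obstacle is the counting bound on $M_n$: the constraint ``$\pi_V(\Phi[\bi]) \cap B_n \ne \emptyset$'' only pins a single point of the interval to $B_n$, so careful use of (\ref{overlap}) and the fat condition is required to separate the ``bulk'' intersections (where the probabilistic gain $2^{-n\epsilon/2}$ is decisive) from those whose bulk lies in the narrow shell around $\partial B_n$.
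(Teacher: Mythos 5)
Your overall architecture --- reduce to the dyadic-scale bad sets $B_n$, show that $\pi_V(\Phi(\Sigma^\omega))\cap\limsup_n B_n$ almost surely has Hausdorff dimension bounded away from $1$, and play this off against the positive-probability event $\{\dim_H \pi_V(\Phi(\Sigma^\omega))=1\}$ --- is exactly the paper's, and your endgame (finite stability of Hausdorff dimension, $V\setminus T_\epsilon\supset \pi_V(\Phi(\Sigma^\omega))\setminus T_\epsilon$) is fine. The gap is precisely where you locate it: the bound on $M_n=\#\{\bi\in\Lambda_{2^{-n}}:\pi_V(\Phi[\bi])\cap B_n\neq\emptyset\}$, and the Fubini-plus-boundary-shell argument you sketch does not close it. The bulk term is fine and yields a contribution of order $2^{n(1+\alpha-\epsilon/2)}$, but the shell term is out of control: $B_n$ is a sublevel set of the step function $N_{2^{-n}}$, which is a sum of $\#\Lambda_{2^{-n}}$ interval indicators, so $B_n$ may have on the order of $\#\Lambda_{2^{-n}}$ components and the shell $B_n^{(c_1 2^{-n})}\setminus B_n$ can have Lebesgue measure of order one rather than $o(1)$. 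On that shell the only available pointwise bound is the packing bound $N_{2^{-n}}(y)\lesssim 2^{n(d-1)}$, and nothing better can hold there, since by definition every point of the shell outside $B_n$ has $N_{2^{-n}}>2^{n(\alpha-\epsilon/2)}$. Already for $d=2$ this gives only $M_n\lesssim 2^{2n}$, hence $\E\,\mathcal{H}^s_\infty\lesssim 2^{n(2-s-\alpha)}$, which is not summable when $\alpha<1$.

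The repair is an elementary endpoint reduction that makes the shell unnecessary (this is Lemma \ref{sub1} in the paper's proof). Partition $V$ into intervals $I_j$ of length at most $2d_0 2^{-n}$. Each $\pi_V(\Phi[\bi])=\pi_V(B(\Phi[\bi]))$ with $\bi\in\Lambda_{2^{-n}}$ is an interval of length at least $2d_0 2^{-n}$ (the inscribed ball of $B(\Phi[\bi])$ has radius at least $d_0 2^{-n}$), so if it contains a point of $B_n\cap I_j$ it must contain one of the two points $x_1^{(j)}=\inf(B_n\cap I_j)$, $x_2^{(j)}=\sup(B_n\cap I_j)$: a closed interval of length at least $x_2^{(j)}-x_1^{(j)}$ that meets $[x_1^{(j)},x_2^{(j)}]$ contains one of its endpoints. (Since $B_n$ need not be closed one perturbs $x_1^{(j)},x_2^{(j)}$ slightly into $B_n$, exactly as in Lemma \ref{sub1}.) As $x_1^{(j)},x_2^{(j)}$ can be taken in $B_n$, at most $N_{2^{-n}}(x_1^{(j)})+N_{2^{-n}}(x_2^{(j)})\le 2\cdot 2^{n(\alpha-\epsilon/2)}$ cylinders of $\Lambda_{2^{-n}}$ can meet $B_n\cap I_j$, and summing over the $O(2^n)$ relevant intervals $I_j$ gives $M_n\lesssim 2^{n(1+\alpha-\epsilon/2)}$ with no boundary term; your computation then closes, with $\E\,\mathcal{H}^s_\infty(\pi_V(\Phi(\Sigma^\omega))\cap B_n)\lesssim 2^{n(1-s-\epsilon/2)}$ summable for $s>1-\epsilon/2$. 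The paper packages the same idea slightly differently, covering by the intervals $I_j$ themselves and bounding $\mathbb{P}\big(\pi_V(K^\omega_{2^{-n}})\cap B_n\cap I_j\neq\emptyset\big)\le 2c_2 2^{-n\epsilon/2}$ via the two designated points, but this endpoint lemma is the one ingredient your proposal is missing.
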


\noindent{\it Proof.}
To keep the notation simple, we give the proof for $\Phi:\Sigma \to \mathbb{R}^2$ where the sections are intersections with lines perpendicular to the line $V$. The proof is virtually identical for $\Phi:\Sigma \to \mathbb{R}^d$ where $d > 2$.
Write $L_x \equiv \pi_V^{-1}(x)$ for the line through $x\in V$ perpendicular to $V$.
For $x \in V$ and $\rho \in(0,c_0)$ write
\begin{equation}\label{nxr}
N(x,\rho) := \# \{\bi \in \Lambda_\rho :B( \Phi[\bi]) \cap L_x\neq \emptyset\}
 \equiv \# \{\bi \in \Lambda_\rho :\Phi[\bi] \cap L_x\neq \emptyset\}
\end{equation}
for the `box counting numbers', where the equivalence follows as  every line that intersects the convex hull $B(\Phi[\bi])$ also intersects $\Phi[\bi]$. 

Here is the first of three subsidiary lemmas within this proof. This enables us to reduce consideration of coverings of subsets of $L_x$ when estimating $N(x,\rho)$ to  a small set of  $x$. We identify  $V$ with $\mathbb{R} \times \{ 0 \} \subset \mathbb{R}^2$ in the obvious way.

\begin{lemma}\label{sub1}
Let $\rho \in(0,c_0)$ and $M>0$. Let $I \subset V$ be an interval with $|I| \leq \rho$ such that $N(x,\rho) \leq M$ for some $x \in I$. Then there exist $x_1,x_2 \in I $ with $x_1 \leq x_2$ such that 
$$N(x_1,\rho), N(x_2,\rho) \leq M$$
and such that, if $x\in I$ has  $N(x,\rho) \leq M$, then, for all $\bi \in \Lambda_\rho$ such that $B( \Phi[\bi]) \cap L_{x} \neq \emptyset$, either $B( \Phi[\bi])\cap L_{x_1} \neq \emptyset$ or $B( \Phi[\bi]) \cap L_{x_2} \neq \emptyset$.
\end{lemma}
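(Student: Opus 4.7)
My plan is to take $x_1$ and $x_2$ to be the extrema of $S := \{x \in I : N(x, \rho) \leq M\}$, which is non-empty by hypothesis. Set $x_1 := \inf S$ and $x_2 := \sup S$, so $x_1 \leq x_2$ and both lie in $I$ (taking $I$ to be closed, or else working in its closure). The hope is that these extremal points both inherit the bound $N \leq M$ and together pierce every box interval that is witnessed by some point of $S$.

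For each $\bi \in \Lambda_\rho$ the projection $[a_\bi, b_\bi] := \pi_V(B(\Phi[\bi]))$ is a closed interval, since $B(\Phi[\bi])$ is compact and convex, and so
\[
N(x, \rho) = \sum_{\bi \in \Lambda_\rho} \mathbf{1}_{[a_\bi, b_\bi]}(x)
\]
is a finite sum (by~(1)) of upper semi-continuous integer-valued indicators; in particular, $N(\cdot, \rho)$ is upper semi-continuous, integer-valued, and a step function on $I$ with only finitely many jumps. To verify $N(x_1, \rho), N(x_2, \rho) \leq M$, I would approach $x_1$ from within $S$ via a sequence $y_n \to x_1^+$, observing that the only boxes whose indicator differs between $y_n$ (for $n$ large) and $x_1$ are those with $b_\bi = x_1$; an integer-valued step-function accounting using $N(y_n) \leq M$ then delivers $N(x_1, \rho) \leq M$, and the argument at $x_2$ is symmetric.

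For the piercing property, fix $\bi \in \Lambda_\rho$ with $B(\Phi[\bi]) \cap L_x \neq \emptyset$ for some $x \in I$ with $N(x, \rho) \leq M$; equivalently, $x \in [a_\bi, b_\bi] \cap S$, so $a_\bi \leq x \leq b_\bi$ and $x_1 \leq x \leq x_2$. If $a_\bi \leq x_1$, the chain $a_\bi \leq x_1 \leq x \leq b_\bi$ places $x_1 \in [a_\bi, b_\bi]$, i.e., $B(\Phi[\bi]) \cap L_{x_1} \neq \emptyset$; symmetrically, $b_\bi \geq x_2$ yields $x_2 \in [a_\bi, b_\bi]$, so $B(\Phi[\bi]) \cap L_{x_2} \neq \emptyset$. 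The main obstacle I anticipate is twofold: (i) the upper-semi-continuity step requires care because $S$, as the sublevel set of an upper semi-continuous function, is only open in $I$, so $\inf S$ and $\sup S$ may initially fail to lie in $S$; and (ii) the piercing argument as outlined must rule out, or otherwise absorb, the intermediate case $x_1 < a_\bi \leq b_\bi < x_2$. Both points should yield to a finer step-function analysis at the extremal points of $S$, exploiting the integer-valued structure of $N$ and the definition of $x_1, x_2$ as the infimum and supremum of $S$.
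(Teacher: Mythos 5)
Your overall strategy (extremal points of the sublevel set $S=\{x\in I: N(x,\rho)\le M\}$) is the same as the paper's, but both of the issues you flag at the end are genuine gaps, and the resolutions you sketch would not close them. First, the claim $N(\inf S,\rho)\le M$ is simply false in general, and your limiting argument gives the inequality in the \emph{wrong} direction: if $y_n\to x_1^+$ with $y_n\in S$, then for large $n$ every projection interval $[a_\bi,b_\bi]$ containing $y_n$ also contains $x_1$ (finitely many intervals, each with $a_\bi\le x_1$ eventually), so $N(y_n)\le N(x_1)$; the intervals with $b_\bi=x_1$ are counted in $N(x_1)$ but not in $N(y_n)$, so $N(x_1)$ can strictly exceed $M$ (take $M=1$ and two cylinders projecting to $[x_1-\rho,x_1]$ and $[x_1,x_1+\rho]$). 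No ``finer step-function accounting'' recovers $N(x_1,\rho)\le M$ at $\inf S$ itself. The paper's fix is to perturb: if $N(\inf S,\rho)>M$, choose $x_1$ slightly to the right of $\inf S$, inside $S$ and inside an interval containing no endpoints $a_\bi,b_\bi$; one must then also check the piercing property for the points of $S$ lying in $(\inf S,x_1)$, which works because any $[a_\bi,b_\bi]$ with $b_\bi\in[\inf S,x_1)$ must have $b_\bi=\inf S$, forcing $x=\inf S\notin S$.

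Second, and more importantly, the ``intermediate case'' $x_1<a_\bi\le b_\bi<x_2$ is not a technicality to be absorbed by combinatorics at the endpoints of $S$ --- ruling it out is the geometric heart of the lemma, and your proposal never invokes the two hypotheses that make it work, namely $|I|\le\rho$ and the lower bound $|\Phi[\bi]|\ge\rho$ for $\bi\in\Lambda_\rho$. These (together with the standing inradius/diameter assumption, which guarantees that the projection $\pi_V(B(\Phi[\bi]))$ has length comparable to the diameter) give $b_\bi-a_\bi\gtrsim\rho\ge |I|\ge x_2-x_1$, so an interval $[a_\bi,b_\bi]$ meeting $[x_1,x_2]$ cannot be strictly contained in $(x_1,x_2)$ and must capture $x_1$ or $x_2$. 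Without this length comparison the lemma is false: a short projection interval sitting strictly between $x_1$ and $x_2$ would be hit by some $x\in S$ but by neither $x_1$ nor $x_2$. As written, the proposal identifies both obstacles but supplies neither the perturbation argument nor the diameter-versus-$|I|$ comparison, so the proof is incomplete at its two essential steps.
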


\begin{proof} 
Let $x'_{1}= \inf\{x\in I: N(x,\rho) \leq M\}$. If $N(x'_{1},\rho) \leq M$ then take $x_1 = x'_{1}$. Otherwise take $x_1 > x'_{1}$ sufficiently close to $x'_{1}$ to ensure that  both $N(x_{1},\rho) \leq M$ and
\begin{align*}
 \{\bi \in \Lambda_\rho & : B( \Phi[\bi]) \cap L_{x_1}  \neq \emptyset\}\\
& = \{\bi \in \Lambda_\rho : B( \Phi[\bi]) \cap L_{x_1} \neq \emptyset \mbox{ and } \pi_V(\mbox{int}B( \Phi[\bi]))\cap [x_1,\infty) \neq \emptyset\}.
\end{align*}
In the same way, we may take $x_2$ to be $\sup\{x\in I: N(x,\rho) \leq M\}$ or a slightly smaller number if necessary. Clearly we may ensure that $x_1 \leq x_2$.
Since the $B( \Phi[\bi])$ with  $\bi \in \Lambda_\rho$ have diameter at least $\rho$ and $x_2-x_1 \leq \rho$, the conclusion of the lemma follows.
\end{proof}

\medskip

 We now write
\begin{equation*}
N^\omega(x,\rho) = \# \{\bi \in \Lambda_\rho^\omega : B(\Phi[\bi])  \cap L_{x} \neq \emptyset\}
\end{equation*}
for the random analogue of (\ref{nxr}).
Fix $\epsilon \in (0,\alpha)$ and for $\rho\in(0,c_0)$ let $S_\rho$ be the deterministic subset of $V$:
\begin{equation}\label{sr}
S_\rho = \{x \in V:N(x,\rho) \leq \rho^{-\alpha +\epsilon/2}\}.
\end{equation}

The second subsidiary lemma shows that  if $x \in S_\rho$ then the probability that $L_x$ has non-empty intersection with $\Phi(\Sigma^\omega)$ is small.

\begin{lemma}\label{sub2}
Let  $\rho\in(0,c_0)$ and let $I \subset V$ be an interval with $|I| \leq \rho$ such that $I \cap S_\rho \neq \emptyset$. Then 
\begin{equation}\label{sub2eqn}
\mathbb{P}\big(N^\omega( x,\rho) >0 \mbox{\rm{ for some }} x \in  I \cap S_\rho \big) \leq 2 c_2 \rho^{\epsilon/2}.
\end{equation}
\end{lemma}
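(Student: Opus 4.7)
The plan is to apply Lemma \ref{sub1} with $M = \rho^{-\alpha+\epsilon/2}$ to reduce the continuum of values $x \in I \cap S_\rho$ to only two representative points, and then bound the probability at those two points directly using the $\alpha$-random property (\ref{prob}).

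First I would invoke Lemma \ref{sub1}: since $I \cap S_\rho \neq \emptyset$, the hypothesis $N(x,\rho) \leq M$ is satisfied for some $x \in I$ with $M = \rho^{-\alpha+\epsilon/2}$. The lemma produces $x_1, x_2 \in I$ with $N(x_1,\rho), N(x_2,\rho) \leq M$ (so in fact $x_1, x_2 \in S_\rho$) with the following \emph{covering property}: any $\bi \in \Lambda_\rho$ for which $B(\Phi[\bi])$ meets some $L_x$ with $x \in I \cap S_\rho$ must already meet $L_{x_1}$ or $L_{x_2}$. Consequently, the event on the left-hand side of (\ref{sub2eqn}) is contained in $\{N^\omega(x_1,\rho) > 0\} \cup \{N^\omega(x_2,\rho) > 0\}$.

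Next I would estimate each of these probabilities. By definition of $N^\omega$ and a union bound,
$$
\P(N^\omega(x_i,\rho) > 0) \;\leq\; \sum_{\bi \in \Lambda_\rho :\, B(\Phi[\bi]) \cap L_{x_i} \neq \emptyset} \P(\bi \in \Lambda_\rho^\omega) \;\leq\; N(x_i,\rho)\, c_2\, \rho^{\alpha},
$$
using (\ref{prob}). Since $x_i \in S_\rho$, we have $N(x_i,\rho) \leq \rho^{-\alpha+\epsilon/2}$, so each probability is at most $c_2 \rho^{\epsilon/2}$. One more union bound over $i \in \{1,2\}$ yields the desired estimate $2c_2 \rho^{\epsilon/2}$.

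There is no real obstacle here — the argument is essentially an application of Lemma \ref{sub1} combined with the first-moment estimate that is built into the definition of an $\alpha$-random subset. The only subtle point is making sure that the points $x_1, x_2$ provided by Lemma \ref{sub1} indeed lie in $S_\rho$ (so that the bound $N(x_i,\rho) \leq \rho^{-\alpha+\epsilon/2}$ is available), but this is immediate from the conclusion $N(x_i,\rho) \leq M$ of that lemma with our choice of $M$.
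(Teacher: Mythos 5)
Your proof is correct and follows essentially the same route as the paper: apply Lemma \ref{sub1} with $M=\rho^{-\alpha+\epsilon/2}$ to reduce to the two points $x_1,x_2\in I\cap S_\rho$, then bound $\P(N^\omega(x_j,\rho)>0)$ by the expected count $c_2\rho^\alpha N(x_j,\rho)\le c_2\rho^{\epsilon/2}$ and take a union bound. The paper phrases the second step via $\E(N^\omega(x_j,\rho))$ and Markov's inequality, which is the same first-moment estimate you write as a union bound over $\bi$.
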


\begin{proof} 
If  $I \cap S_\rho= \emptyset$ then (\ref{sub2eqn}) is trivial. Otherwise,  applying Lemma \ref{sub1} to the interval $I$, taking $M = \rho^{-\alpha +\epsilon/2}$ and noting (\ref{sr}), we may find $x_1, x_2 \in I \cap S_\rho$ such that,  for all $x \in I \cap S_\rho$, all $\omega$,  and all $\bi \in \Lambda_\rho^\omega\subset  \Lambda_\rho$ with $B(\Phi[\bi]) \cap L_{x} \neq \emptyset$, either $B(\Phi[\bi]) \cap L_{x_1}\neq \emptyset$ or $B(\Phi[\bi]) \cap L_{x_2} \neq \emptyset$. In particular, for all $x \in I \cap S_\rho$
\begin{equation}\label{npx}
N^\omega(x,\rho) \leq N^\omega(x_1,\rho) + N^\omega(x_2,\rho).
\end{equation}
For $j=1,2$, using (\ref{prob}) and (\ref{sr}),
\begin{eqnarray*}
\mathbb{E} \big(N^\omega(x_j,\rho)\big) &=& 
\sum\big\{\mathbb{P}(\bi \in \Lambda_\rho^\omega):  \bi \in \Lambda_\rho  , B(\Phi[\bi]) \cap L_{x_j} \neq \emptyset\big\}\\
 &\leq&
 \sum\big\{c_2 \rho^{\alpha}: \bi \in \Lambda_\rho, B(\Phi[\bi]) \cap L_{x_j} \neq \emptyset\big\}\\
&\leq& c_2 \rho^{\alpha}N(x_j,\rho)\\
&\leq&  c_2 \rho^{\alpha}  \rho^{-\alpha +\epsilon/2},
\end{eqnarray*}
 so
$$\mathbb{P}\big(N^\omega(x_j,\rho) >0\big) \leq c_2  \rho^{\epsilon/2}.$$
The conclusion (\ref{sub2eqn}) follows  from (\ref{npx}). 
\end{proof} 
\medskip

Let
\begin{equation*}
 S = \{x \in V:\lbd ( \Phi(\Sigma)\cap L_x) \leq \alpha-\epsilon\}.
\end{equation*}
Note that, for all $\rho\in(0,c_0)$, we have 
$$\Phi(\Sigma) \cap L_x \subset \bigcup_{\bi \in \Lambda_\rho} \Phi[\bi]\cap L_x.$$
Thus, from (\ref{sr}),  (\ref{nxr}) and  (\ref{lbd}),
$$ S \subset \bigcap_{N=N_0}^\infty \bigcup_{n=N}^\infty S_{2^{-n}},$$
where we choose $N_0$ so that  $0< 2^{-N_0}<c_0$. 

The final subsidiary lemma essentially shows that  the Hausdorff dimension of $S$ cannot be too big.
\begin{lemma}\label{sub3}
With $S$ as above, $\dim_H (\pi_V(\Phi(\Sigma^\omega)) \cap S) \leq 1-\epsilon/4$ almost surely.
\end{lemma}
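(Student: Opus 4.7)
The plan is to bound the $s$-dimensional Hausdorff measure of $\pi_V(\Phi(\Sigma^\omega)) \cap S$ for $s = 1 - \epsilon/4$ via a random cover built from dyadic intervals. Since $\Phi(\Sigma)$ is compact, its projection onto $V$ lies in an interval of some finite length $R$; for each $n \geq N_0$ I partition this interval into a family $\mathcal{I}_n$ of at most $\lceil R 2^n \rceil + 1$ subintervals of length $2^{-n}$. For each $I \in \mathcal{I}_n$, let $A_{n,I}$ be the event that $N^\omega(x,2^{-n}) > 0$ for some $x \in I \cap S_{2^{-n}}$. Lemma \ref{sub2} (with $\rho = 2^{-n}$) gives $\P(A_{n,I}) \leq 2 c_2 \cdot 2^{-n\epsilon/2}$, so writing $B_n := \{I \in \mathcal{I}_n : A_{n,I} \text{ occurs}\}$, the union bound yields $\E[\#B_n] \leq C \cdot 2^{n(1-\epsilon/2)}$ for a constant $C$ depending only on $R$ and $c_2$.

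The crucial step is the deterministic inclusion
$$\pi_V(\Phi(\Sigma^\omega)) \cap S \;\subset\; \bigcup_{n \geq N}\bigcup_{I \in B_n} I \qquad \text{for every } N \geq N_0.$$
Indeed, if $x$ lies in the left-hand side, then $x \in S \subset \bigcap_{M = N_0}^\infty \bigcup_{n \geq M} S_{2^{-n}}$ yields some $n \geq N$ with $x \in S_{2^{-n}}$, while $x \in \pi_V(\Phi(\Sigma^\omega))$ supplies $\bi \in \Lambda_{2^{-n}}^\omega$ with $B(\Phi[\bi]) \cap L_x \neq \emptyset$, so $N^\omega(x,2^{-n}) > 0$; hence some $I \in \mathcal{I}_n$ containing $x$ lies in $B_n$. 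Using this inclusion as a $2^{-N}$-cover,
$$\mathcal{H}^s_{2^{-N}}\bigl(\pi_V(\Phi(\Sigma^\omega)) \cap S\bigr) \;\leq\; X_N := \sum_{n \geq N} \#B_n \cdot 2^{-ns}.$$

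Taking expectations gives $\E[X_N] \leq C \sum_{n \geq N} 2^{-n\epsilon/4} \leq C' \cdot 2^{-N\epsilon/4}$, which is summable over $N$; by Fubini, $\sum_N X_N < \infty$ almost surely, so $X_N \to 0$ a.s. Since $\mathcal{H}^s_{2^{-N}}(\pi_V(\Phi(\Sigma^\omega)) \cap S)$ is non-decreasing in $N$ yet dominated by the a.s.\ null sequence $X_N$, it must vanish for every $N$; letting $N \to \infty$ yields $\mathcal{H}^s(\pi_V(\Phi(\Sigma^\omega)) \cap S) = 0$ almost surely, giving $\dim_H(\pi_V(\Phi(\Sigma^\omega)) \cap S) \leq 1-\epsilon/4$ a.s. I expect the main obstacle to be the bookkeeping in this chain: one must check that for every $x \in \pi_V(\Phi(\Sigma^\omega)) \cap S$ and every $N$, there is a scale $n \geq N$ at which $x$ simultaneously lies in $S_{2^{-n}}$ and is witnessed by some $\bi \in \Lambda_{2^{-n}}^\omega$, so that Lemma \ref{sub2} applies to the dyadic interval containing $x$ with the intended probability bound.
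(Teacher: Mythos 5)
Your proof is correct and follows essentially the same route as the paper's: dyadic intervals at scale $2^{-n}$, the probability bound from Lemma \ref{sub2}, a first-moment count of the bad intervals, and the covering inclusion $\pi_V(\Phi(\Sigma^\omega))\cap S\subset\bigcup_{n\ge N}\bigcup_{I\in B_n}I$. The only (harmless) difference is at the end: the paper bounds $\mathbb{E}\big(\mathcal{H}^{1-\epsilon/4}\big)<\infty$ to conclude the measure is a.s.\ finite, whereas you push through $X_N\to 0$ a.s.\ to get the slightly stronger conclusion that the measure vanishes; both yield the dimension bound.
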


\begin{proof} 
For $\rho \in (0,c_0)$ write 
$$K^\omega_\rho := \bigcup \{B(\Phi[\bi]): \bi \in \Lambda^\omega_\rho  \}  \supset \Phi(\Sigma^\omega).$$
Let $I \subset V$ be an interval with $ |I| = \rho \leq c_0$. If $S_\rho \cap I \neq \emptyset$ then by Lemma \ref{sub2}
$$\mathbb{P}\big(\pi_V(K^\omega_\rho ) \cap S_\rho \cap I \neq \emptyset \big)\leq 2c_2 \rho^{\epsilon/2}.$$  
For $n\geq N_0$, let ${\mathcal C}_n$ be the family of closed binary subintervals of $V$ of lengths $2^{-n}$. Thus, for $n\geq N_0$,
$$\mathbb{E}\big(\# j : \pi_V(K^\omega_{2^{-n}} )\cap S_{2^{-n}} \cap I_j \neq \emptyset, I_j \in {\mathcal C}_n \big)
\leq 2^{n+1}|\Phi(\Sigma)|2 c_2 2^{-n\epsilon/2} = c_3 2^{n(1-\epsilon/2)}.$$ 
In particular,
\[
\sum_{n=N_0}^\infty 2^{-n(1-\epsilon/4)}\mathbb{E}\big(\# j : \pi_V(K^\omega_{2^{-n}} ) \cap S_{2^{-n}}\cap I_j \neq \emptyset, I_j \in {\mathcal C}_n \big)= c_3\sum_{n=N_0}^\infty 2^{-\epsilon/4} <\infty.
\]
Then, for all $N \geq N_0$,
\begin{eqnarray*}
\pi_V(\Phi(\Sigma^\omega)) \cap S & \subset & \pi_V(\Phi(\Sigma^\omega)) \cap \bigcup_{n=N}^\infty S_{2^{-n}}\\
& = &  \bigcup_{n=N}^\infty \pi_V(\Phi(\Sigma^\omega)) \cap S_{2^{-n}}\\
& \subset &  \bigcup_{n=N}^\infty \pi_V(K^\omega_{2^{-n}}) \cap S_{2^{-n}}\\
& \subset &  \bigcup_{n=N}^\infty \bigcup_{I_j \in {\mathcal C}_n}
\big\{I_ j : \pi_V(K^\omega_{2^{-n}} ) \cap S_{2^{-n}}\cap I_j \neq \emptyset\big\}.
\end{eqnarray*} 
Hence, writing ${\mathcal H}^s_\delta$ for the $s$-dimensional Hausdorff $\delta$-premeasure, and 
${\mathcal H}^s$ for $s$-dimensional Hausdorff measure, it follows on taking these covers of
 $\pi_V(\Phi(\Sigma^\omega)) \cap S$ for each $N$ that
\begin{align*}
\mathbb{E}\big( {\mathcal H}^{1-\epsilon/4}(\pi_V&(\Phi(\Sigma^\omega)) \cap S)\big)\\
&= 
\mathbb{E}\big(\lim_{N \to\infty} {\mathcal H}^{1-\epsilon/4}_{2^{-N}}(\pi_V(\Phi(\Sigma^\omega)) \cap S)\big)\\
&\leq 
\limsup_{N \to\infty}\mathbb{E}\big( {\mathcal H}^{1-\epsilon/4}_{2^{-N}}(\pi_V(\Phi(\Sigma^\omega)) \cap S)\big)\\
&\leq
  \mathbb{E}\Big( \sum_{n=N_0}^\infty 2^{-n(1-\epsilon/4)}\big(\# j : \pi_V(K^\omega_{2^{-n}} ) \cap S_{2^{-n}}\cap I_j \neq \emptyset, I_j \in {\mathcal C}_n\big) \Big) \\
 &< \infty.
\end{align*}
It follows that almost surely ${\mathcal H}^{1-\epsilon/4}(\pi_V(\Phi(\Sigma^\omega)) \cap S)< \infty$ and so 
$\dim_H (\pi_V(\Phi(\Sigma^\omega)) \cap S) \leq 1-\epsilon/4$.
\end{proof} 
\medskip

To complete the proof of Proposition \ref{prop2}, note that 
$$\dim_H \pi_V(\Phi(\Sigma^\omega)) = \max\big\{  \dim_H (\pi_V(\Phi(\Sigma^\omega)) \cap S),\,
\dim_H (\pi_V(\Phi(\Sigma^\omega)) \setminus S)\big\}$$
so that, conditional on $\dim_H \pi_V(\Phi(\Sigma^\omega))=1$,  an event of positive probability by the hypothesis of the proposition,
 $$1 \leq \max\big\{ 1-\epsilon/4, \dim_H (\pi_V(\Phi(\Sigma^\omega)) \setminus S) \big\}
 \leq \max\big\{ 1-\epsilon/4, \dim_H (\pi_V(\Phi(\Sigma)) \setminus S) \big\}.$$
But this is a deterministic statement, so we conclude that $ \dim_H (\pi_V(\Phi(\Sigma)) \setminus S)=1$. 
\qed
\medskip

\section{Sections of self-similar sets and percolation}
\setcounter{theorem}{0}

Next we obtain a weak dimension conservation property for the lower box-counting dimension of sections for self-similar sets with dense rotations (Theorem \ref{dimconthm}). We also do so for the Hausdorff dimension of sections of Mandelbrot percolation sets (Theorem \ref{mandper}). 

The best known model of fractal percolation is Mandelbrot percolation, based on a decomposition of the $d$-dimensional cube into $M^d$ equal subcubes of sides $M^{-1}$; its topological properties  have been studied  extensively, see \cite{Dek, Falconer14,RS3}. Nevertheless, statistically self-similar subsets of any self-similar set may be constructed using a similar percolation process which may be set up in terms of the symbolic space formulation of Section \ref{sec2}.

Let $ \mathcal{I}= \{f_1,\ldots,f_m\}$ be an IFS of similarities with attractor  $K$. Intuitively, percolation on $K$ is performed by retaining or deleting components of the natural hierarchical construction of $K$ in a  self-similar random manner. Starting with some non-empty compact set $D$ such that $f_i(D) \subset D$ for all $i$, we select a subfamily of the sets $\{f_1(D), \ldots, f_m(D)\}$ according to  some probability distribution, and write $K^1$ for the union of the selected sets. Then, for each  selected $f_i(D)$, we choose subsets from
$\{f_{i}f_{1}(D), \ldots, f_{i}f_{m}(D)\}$ according to the same probability distribution, independently for each $i$, with the union of these sets comprising $K^2$. Continuing in this way, we get a nested hierarchy $K \supset K^1\supset K^2 \supset \cdots$ of random compact sets, where $K^k$ denotes the union of the components remaining at the $k$th stage. The random percolation set $K^\omega \subset K$ is then given by 
$K^\omega = \cap_{k=0}^\infty K^k$, see Figure 1.

\begin{figure}[h]\label{fig1}
\begin{center}
\includegraphics[scale=0.2]{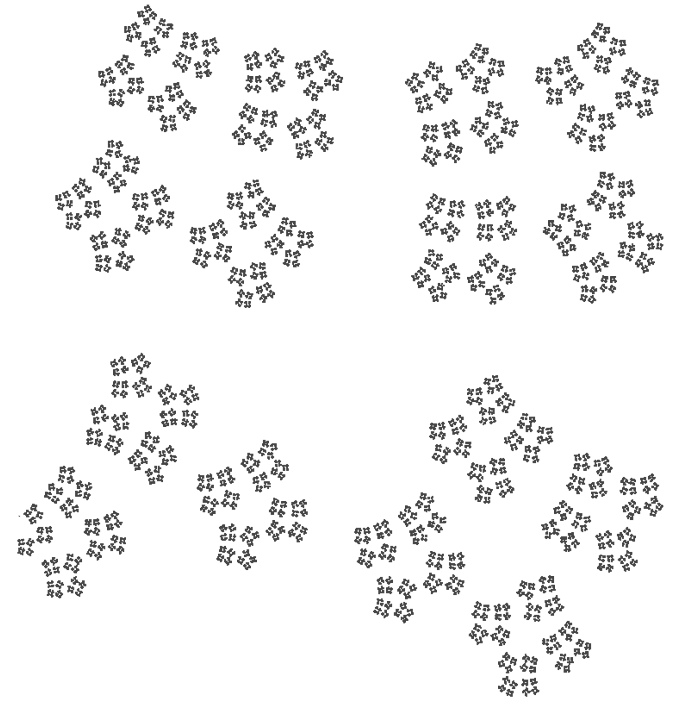}
\qquad\qquad
\includegraphics[scale=0.2]{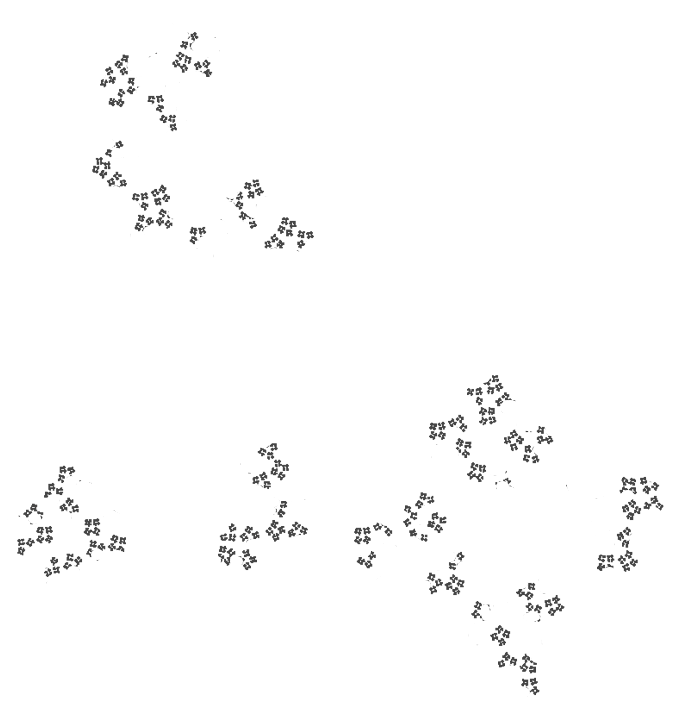}
\caption{A self-similar attractor of an IFS with rotations and a subset obtained by the percolation process}
\end{center}
\end{figure}

More formally, percolation on a self-similar set $K$ is defined using the natural representation of $K$ by symbolic space.  As in Section \ref{sec2} we take $ \Lambda =\{1,\ldots,m\}$ with 
$\Sigma_* = \cup_{n \geq 0} \Lambda^n$ the set of finite words and   $\Sigma = \Lambda^\mathbb{N}$ the  infinite words. 
The canonical map $\Phi:\Sigma \to K \subset \mathbb{R}^d$  is given by $\Phi (i_1i_2\ldots) =  \cap_{n=0}^\infty f_{i_1} \cdots  f_{i_n}(D)$ for any non-empty compact set $D$ such that $f_i(D) \subset D$ for  $ i= 1,\ldots,m$. Then $K =  \cup_{\bi \in \Sigma}\Phi(\bi)$, with $\Phi$ providing a (not necessarily injective) index to  the points of $K$.

To define percolation on $K$, let $(\Omega,\mathcal{A},\mathbb{P})$ be a probability space. Let $X \equiv (X_1, \ldots, X_m)$ be a random vector taking values in $\{0,1\}^m$. Let ${\mathcal X} = \{X^\bi \equiv (X^\bi_1, \ldots, X^\bi_m) \}_{\bi \in \Sigma_*}$ be a family of independent random vectors with values in  $\{0,1\}^m$, each having the distribution of $X$, on the probability space $(\Omega^{\Sigma_*},\mathcal{A}_\mathcal{X}, \, \mathbb{P}^{\otimes \Sigma_*})$, where $\mathcal{A}_\mathcal{X}\subset \mathcal{A}^{\Sigma_*}$ is the $\sigma$-algebra generated by $\mathcal{X}$.
This defines a random set $\Sigma^\omega = \{i_1i_2\ldots \in \Sigma : 
X^{\emptyset}_{i_1}X^{i_1}_{i_2} X^{i_1i_2}_{i_3}\cdots = 1\} \subset \Sigma$.
The {\it percolation set} $K^\omega\subset K$ is the image of $\Sigma^\omega$ under the canonical map, that is the random set $K^\omega = \Phi (\Sigma^\omega)$.

By standard branching process theory \cite{AN}, if $\mathbb{E}(\#i : X_i =1)   > 1$ there is a positive probability that  $\Sigma^\omega$, and thus $K^\omega$, is non-empty. Provided the IFS defining $K$ satisfies OSC then, conditional on  $K^\omega\neq \emptyset$,
\begin{equation}\label{percdim}
\dim_B K^\omega=\dim_H K^\omega = s \text{ a.s where } s \text{ satisfies }
\mathbb{E}\Big(\sum_{i=1}^m  X_i r_i^s\Big) = 1,
\end{equation}
where $r_i$  is the scaling ratio of $f_i$, see \cite{Fa86,MW86}.

We say that the percolation process is {\it standard with exponent} $\alpha$ if the distribution  
of $X = (X_1, \ldots, X_m)$ is defined by 
$\mathbb{P}(X_i= 1) = r_i^\alpha, \,  \mathbb{P}(X_i= 0) = 1- r_i^\alpha$
 independently for $i=1,\ldots,m$. Then by (\ref{percdim}), provided that $\alpha <\dim_H K$, there is a positive probability that $K^\omega \not= \emptyset$, in which case 
$\dim_H K^\omega = \dim_H K - \alpha$ a.s..

The following theorem on the dimension of projections of percolation subsets of self-similar sets was obtained as a corollary of a more general theorem on projections of random cascade measures on self-similar sets \cite{FJ}.

\begin{theorem}{\cite{FJ}}\label{properc}
Let  $K$ be the attractor of an IFS of contracting similarities on $\mathbb{R}^d$  with dense rotations and satisfying OSC. Let $\mathbb{P}$ be a probability distribution  of a standard percolation process on $K$ with 
$\mathbb{E} (\#i : X_i =1)   > 1$, so that the percolation set $K^\omega \not= \emptyset$ with positive probability.  Then, conditional on $K^\omega \not= \emptyset$, almost surely  
\[
\dim_H \pi_V (K^\omega)=\min(k,\dim_H K^\omega),
 \]
for every $k$-dimensional subspace $V$.
\end{theorem}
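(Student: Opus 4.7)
My plan is to lift the statement from sets to measures: construct a natural random measure $\mu^\omega$ supported on $K^\omega$ of full dimension, and then show that $\dim_H \pi_V \mu^\omega = \min(k,\dim_H K^\omega)$ for \emph{every} $V$ simultaneously, almost surely.  The set statement follows because $\pi_V\mu^\omega$ is carried by $\pi_VK^\omega$.

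\textbf{Step 1 (cascade measure).}  Let $\alpha$ solve $\E(\sum_i X_i r_i^\alpha)=1$, so $\dim_H K^\omega=\alpha$ a.s.\ conditional on survival.  Distribute mass over cylinders via the classical Mandelbrot cascade: assign to each $\bi\in\Lambda^n$ the random weight $\prod_{j=1}^n r_{i_j}^\alpha X^{i_1\cdots i_{j-1}}_{i_j}$, take the weak limit of the resulting measures on $\Sigma$, and push forward by $\Phi$ to obtain $\mu^\omega$.  Standard branching-in-random-environment arguments (as in the references cited around (\ref{percdim})) give $\dim_H \mu^\omega = \alpha$ a.s.\ on survival.

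\textbf{Step 2 (projection for a single $V$).}  For each fixed $V$ I would estimate the $t$-energy of $\pi_V\mu^\omega$ for $t<\min(k,\alpha)$.  Using the cascade self-similarity and the independence of the factors $X^\bi$ across disjoint sub-branches, the expected energy decomposes as a geometric series in $n$ whose convergence is governed by the condition $\E(\sum_i X_i r_i^{\alpha+t-k})<\infty$, which holds for $t<\min(k,\alpha)$.  Hence $\E \iint |\pi_V(x-y)|^{-t}\,d\mu^\omega(x)d\mu^\omega(y)<\infty$, and therefore $\dim_H \pi_V\mu^\omega \ge t$ a.s.\ for each fixed $V$.

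\textbf{Step 3 (upgrade to every $V$ — the main obstacle).}  The key identity is
\[
\pi_V\circ f_\bi = r_\bi\,\pi_{R_\bi^{-1}V} + c_{V,\bi},
\]
so that the cascade decomposition at level $n$ becomes, for each $V$,
\[
\pi_V\mu^\omega = \sum_{\bi\in\Lambda^n\cap\Sigma_*^\omega} r_\bi^\alpha\,\big(T_{V,\bi}\big)_*\!\big(\pi_{R_\bi^{-1}V}\mu^{\bi,\omega}\big),
\]
with the $\mu^{\bi,\omega}$ independent copies and $T_{V,\bi}$ a similarity.  Thus $\pi_V\mu^\omega$ is essentially an average of rescaled rotated projections, indexed by $\{R_\bi^{-1}V\}$, which is dense in the orbit $SO(d,\mathbb{R})\cdot V$ by the dense-rotation hypothesis.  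The conclusion is obtained by combining Step 2 with a Fourier/energy estimate that is \emph{uniform in direction}: using the independence of $\{X^\bi\}$ across levels, one derives a polynomial decay bound for $\E|\widehat{\mu^\omega}(\xi)|^2$, then averages along the orbit of rotations so that the exceptional directions of measure zero from Step 2 can no longer be isolated.  The chief difficulty is precisely this uniformization — density of rotations yields pointwise approximation but not uniform energy control, and removing that gap requires exploiting the full independence structure of the cascade to get direction-uniform moment estimates, after which a Borel–Cantelli argument along a countable dense subset of the Grassmannian completes the proof for every $V$ simultaneously.
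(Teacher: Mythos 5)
The theorem you are asked about is not proved in this paper at all: it is quoted from \cite{FJ}, where it is deduced from a general theorem on projections of random cascade measures. Your Step 1 matches that source (one does put a Mandelbrot cascade measure $\mu^\omega$ of full dimension $\alpha$ on $K^\omega$), but Steps 2 and 3 contain genuine gaps. In Step 2, taking the expectation of the $t$-energy of $\pi_V\mu^\omega$ for a \emph{fixed} $V$ does not help: the percolation randomness only thins the measure, it does not move its support, so $\E\iint|\pi_V(x-y)|^{-t}\,d\mu^\omega(x)\,d\mu^\omega(y)$ collapses to an energy integral of the deterministic self-similar measure against the degenerate kernel $|\pi_V z|^{-t}$, and its finiteness for a given $V$ is exactly as hard as the deterministic all-directions projection theorem. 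The energy method only yields the bound for almost every $V$, after integrating over the Grassmannian using $\int_{G(d,k)}|\pi_V z|^{-t}\,dV\lesssim|z|^{-t}$.

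Step 3 is the entire content of the theorem and is not actually carried out; the mechanism you sketch would not close the gap. A direction-uniform bound on $\E|\widehat{\mu^\omega}(\xi)|^2$ is intrinsically an averaged (correlation/energy) quantity and cannot exclude a Lebesgue-null set of bad directions, and a Borel--Cantelli argument over a countable dense subset of $G(d,k)$ proves nothing for the remaining directions because $V\mapsto\dim_H\pi_V\mu^\omega$ carries no a priori continuity. The proof in \cite{FJ} (mirrored in this paper by Proposition \ref{mixing} and Theorem \ref{proj}) supplies precisely the missing ingredient via the Hochman--Shmerkin local entropy average formalism: the skew product $(\Sigma\times\Omega^{\mathbb N},\sigma,\mathbb Q)$ is shown to be mixing, the dimension of $\pi_V$ of the cascade measure is expressed as an ergodic average of local entropies that is \emph{lower semicontinuous} in $V$ and essentially invariant under $V\mapsto R_i^{-1}V$, and density of the rotations then upgrades the almost-every-$V$ lower bound to all $V$. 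That semicontinuity of the entropy expression is what your Fourier/Borel--Cantelli scheme does not and cannot provide, so the proposal as written does not prove the statement.
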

Thus, conditional on non-extinction, the projections of $K^\omega$ onto  {\it all} subspaces have the `generic' dimension.
We now apply Proposition \ref{prop2} to  sections of self-similar sets. The conclusion applies to self-similar sets $K$ such that their projection onto each line is the same as that of the convex hull of $K$. This includes the case where $K$ is connected as well as many other self-similar sets, see Figure 2.

\begin{figure}[h]\label{fig2}
\begin{center}
\includegraphics[scale=0.23]{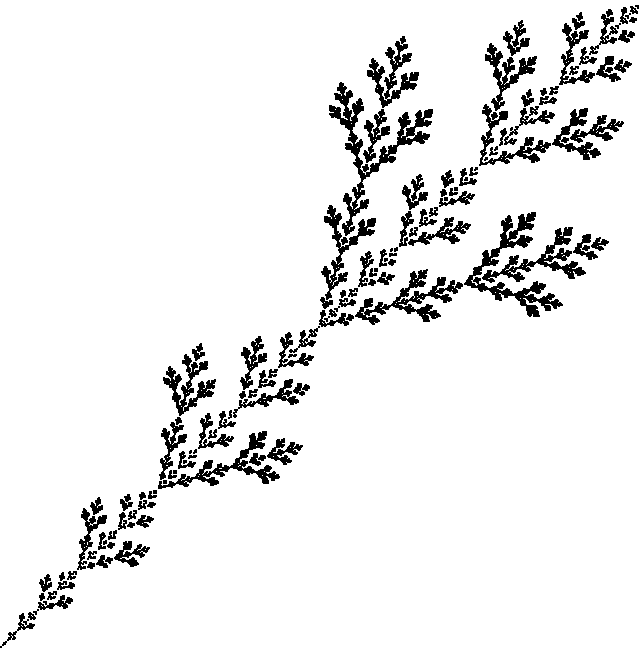}
\quad
\includegraphics[scale=0.185]{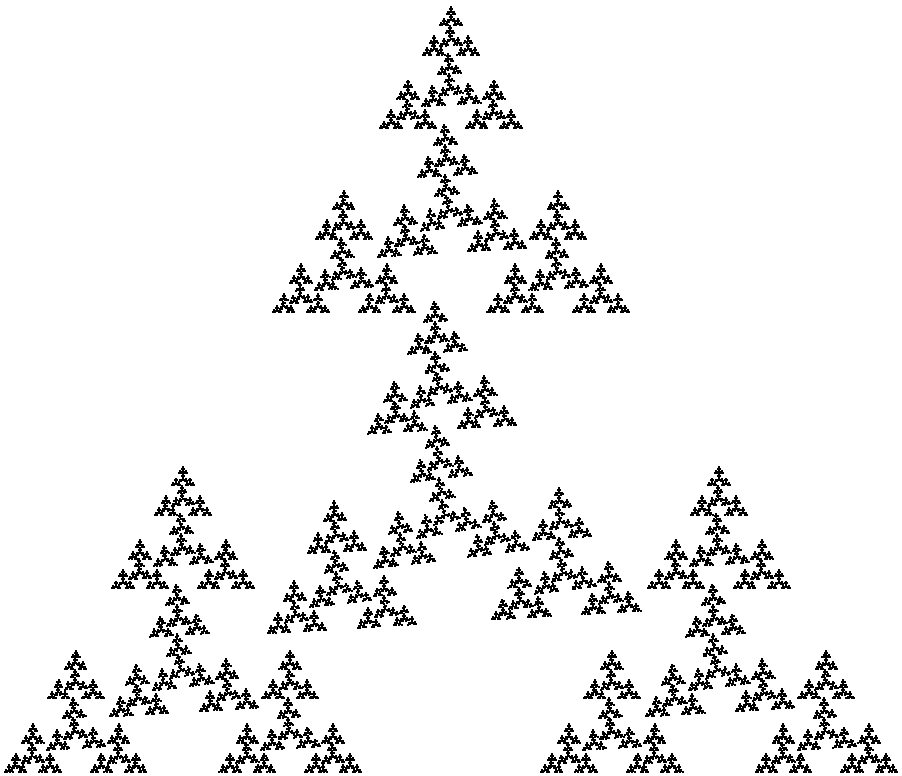}
\caption{A connected and a totally disconnected self-similar set with dense rotations satisfying the conditions of Theorem \ref{dimconthm}}
\end{center}
\end{figure}

\begin{theorem}\label{dimconthm}
Let $\mathcal{I}$  be an IFS of contracting similarities on $\mathbb{R}^d$  with dense rotations and satisfying OSC. Let $K$ be the attractor of $\mathcal{I}$ and suppose $s= \dim_H K >1$ and that the projection of $K$ onto every 1-dimensional subspace equals that of its convex hull.
Then for every 1-dimensional subspace $V$ of $\mathbb{R}^d$ and all $\epsilon\in(0,s-1)$, 
\begin{equation}\label{conclu}
\dim_H \big\{x\in V:\lbd\big(K\cap \pi_V^{-1}(x)\big)> \dim_H K - 1 -\epsilon\big\}=1.
\end{equation}
\end{theorem}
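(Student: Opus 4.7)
The plan is to apply Proposition \ref{prop2} to the canonical symbolic coding $\Phi:\Sigma\to K$ of the attractor, taking $\Sigma^\omega$ to be the symbolic set underlying a standard percolation on $K$ with exponent $\alpha$ chosen just below $s-1$. Concretely, I will fix $\alpha=s-1-\epsilon/2$ and apply Proposition \ref{prop2} with parameter $\epsilon/2$, so that its conclusion reads $\lbd(K\cap \pi_V^{-1}(x))>\alpha-\epsilon/2=s-1-\epsilon$, matching (\ref{conclu}); the upper bound $\dim_H \leq 1$ is automatic since the set lies in the line $V$.

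To invoke Proposition \ref{prop2}, I first check that $\Sigma^\omega$ is $\alpha$-random. For $\bi=i_1\cdots i_n$, the event $\bi\in \Sigma_*^\omega$ forces the independent Bernoullis $X^{\emptyset}_{i_1},X^{i_1}_{i_2},\ldots,X^{i_1\cdots i_{n-1}}_{i_n}$ to all equal $1$, so $\P(\bi\in\Sigma_*^\omega)\le \prod_k r_{i_k}^\alpha=r_\bi^\alpha$. Under OSC, $|\Phi[\bi]|=|K|\,r_\bi$, hence for $\bi\in\Lambda_\rho$ we get $r_\bi^\alpha\le (c_1/|K|)^\alpha\rho^\alpha$, which is (\ref{prob}). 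I also need the projection-of-cylinder hypothesis $\pi_V\Phi[\bi]=\pi_V B(\Phi[\bi])$ for every $\bi\in\Sigma_*$: since each $\Phi[\bi]$ is a similarity image of $K$, projecting it onto $V$ corresponds (after the relevant rigid motion) to projecting $K$ onto a rotated line, so the hypothesis that $K$ and its convex hull project to the same set on every $1$-dimensional subspace transfers to every $\Phi[\bi]$.

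For the remaining hypothesis $\P(\dim_H\pi_V(\Phi(\Sigma^\omega))=1)>0$, I observe that $\sum_i r_i^\alpha>\sum_i r_i^s=1$ because $\alpha<s$, so the underlying Galton--Watson process is supercritical and $\P(K^\omega\ne\emptyset)>0$. Conditional on non-extinction, (\ref{percdim}) gives $\dim_H K^\omega=s-\alpha=1+\epsilon/2>1$ almost surely, and Theorem \ref{properc} then yields $\dim_H\pi_V(K^\omega)=\min(1,\dim_H K^\omega)=1$ almost surely, for every $1$-dimensional subspace $V$. Combining the above with Proposition \ref{prop2} (with $\epsilon$ there set to $\epsilon/2$) delivers (\ref{conclu}).

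I do not anticipate a genuine obstacle: the theorem essentially packages two black boxes (Proposition \ref{prop2} and Theorem \ref{properc}) together with the routine verification that standard percolation on a self-similar set provides an $\alpha$-random subset. The only mildly delicate points are the arithmetic of choosing $\alpha$ relative to $\epsilon$ so that the margin in Proposition \ref{prop2} lines up with the margin in the target inequality, and transferring the convex-hull projection assumption from $K$ to every $\Phi[\bi]$, which is immediate from similarity invariance.
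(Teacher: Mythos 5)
Your proposal is correct and follows essentially the same route as the paper: run standard percolation on $K$ via its symbolic coding, verify the $\alpha$-random condition from OSC, get full-dimensional projections of $K^\omega$ in every direction from Theorem \ref{properc}, and feed this into Proposition \ref{prop2}. The only cosmetic difference is that the paper takes exponent $\alpha=s-1$ exactly and lets the $\epsilon$ in Proposition \ref{prop2} supply the whole margin, whereas you take $\alpha=s-1-\epsilon/2$ and use Proposition \ref{prop2} with margin $\epsilon/2$; both land on the same conclusion.
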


\noindent{\it Proof.}
Let $K$ have its symbolic representation $\Phi: \Sigma \to \mathbb{R}^d$.   As $\Phi[\bi]$ is similar to $K$ for all $\bi \in \Sigma_*$, the projection of each $\Phi[\bi]$ onto every 1-dimensional subspace is the same as that of its convex hull. We set up standard percolation with exponent $ s-1$ on $K$ via its symbolic representation, as above. Then there is a positive probability of non-extinction, conditional on which almost surely, 
$\dim_H \pi_V (K^\omega)=\min\{1,\dim_H K - (s-1)\}=1$ for  every line $V$, using Theorem \ref{properc}.  

A consequence of OSC is that $\Phi$ satisfies conditions (1) and (2) (at (\ref{lambdar}) and (\ref{overlap})) with $c_0 = |K|$ and $c_1 = \max_{1 \leq i \leq m}r_i^{-1}$. Moreover, if $i_1\ldots i_k \in \Lambda_\rho$ then
$\mathbb{P} (i_1\ldots i_k \in \Lambda^\omega_\rho) = r_{i_1}^\alpha \cdots r_{i_k}^\alpha 
\leq |K|^{-\alpha} \rho^\alpha$, so that (\ref{prob}) is satisfied. The conclusion follows by Proposition \ref{prop2} since $\Phi(\Sigma) = K$. 
\hfill$\Box$
\medskip

It would be desirable to dispense with the requirement in Theorem \ref{dimconthm} that the projections of $K$ are the same as those of its convex hull. Without such a condition it is not hard  to show that \eqref{conclu} can be replaced by the conclusion that 
$$\dim_H \big\{x\in V:d(x)> \dim_H K - 1 -\epsilon\big\}=1$$
where 
$d(x):=\varliminf_{\rho\to 0} \log \# N_\rho(L_x^\rho)/-\log\rho$ and where $N_\rho(L_x^\rho)$ denotes the number of $\bi \in \Lambda_\rho$ such that $B(\Phi[\bi]) \cap L_y \neq \emptyset$ for some $y \in [x-\rho,x+\rho]$. (Here $d(x)$ is a kind of lower box-counting dimension conditioning on fibres that  is always no less than the actual lower box-counting dimension of the fibre, with possibility of being strictly larger.)

Next we apply Proposition \ref{prop1} to Mandelbrot percolation. Let $K$ be the unit cube in $\mathbb{R}^d$. 
Fix an integer $M \geq 2$ and  a probability $0<p<1$. We divide $K$ into $M^d$ subcubes of side $1/M$ in the natural way, and retain each subcube independently with probability $p$ to get a set $K^1$ formed as a union of the retained subcubes. We repeat this process with the cubes in $K^1$, dividing each into $M^d$ subcubes of side $1/M^2$ and choosing each with probability $p$ to get a set $K^2$, and so on. This process, termed {\it Mandelbrot percolation}, leads to a percolation set, which we write here as $K^\omega_p = \cap_{k=0}^\infty K^k$ to emphasize the dependence on $p$.

Of course, this may be regarded as  percolation on the self-similar set defined by the IFS 
${\mathcal I} = \{f^{j_1,\ldots,j_d} : 1 \leq j_1,\ldots,j_d\leq M\}$ on $\mathbb{R}^d$ where 
$$f^{j_1,\ldots,j_d}(x_1, \ldots,x_d) = \Big(\frac{x_1+ j_1-1}{M}, \ldots,\frac{x_d+ j_d-1}{M}\Big);$$ 
as before the random construction may be represented in symbolic space, using an alphabet of $M^d$ letters.

If $p > M^{-d}$ then, as above, that there is a positive probability that $K^\omega_p\neq \emptyset$, conditional on which
$\dim_H K^\omega_p = d+\log p / \log M$. A useful observation  is that for $0<p,p' <1$  the intersection of independent realizations of the two random sets $K^\omega_{p} $ and $K^\omega_{p'} $ has the same distribution as that of $K^\omega_{p p'}$.

Rams and Simon \cite{RS,RS2, RS3} and Simon and V\'{a}g\'{o} \cite{SV} have recently obtained results on the dimensions and Lebesgue measure of projections of Mandelbrot percolation that are almost surely valid for projections onto all subspaces. 

\begin{theorem}{\cite{RS,SV}}\label{ramssimon}
Let $1 \leq k \leq d-1$ and let $K^\omega_p \subset \mathbb{R}^d$ be the random set obtained by Mandelbrot percolation on the $d$-dimensional unit cube, using repeated subdivision into $M^d$ subcubes, and selecting cubes independently with probability $p> 1/M^{d-k}$. Then, conditional on  $K^\omega_p \neq \emptyset$, $\dim_H K^\omega_p = d+\log p / \log M>k$, and for every $k$-dimensional subspace $V$ we have
${\mathcal L}^k(\pi_V K^{\omega}_{p})>0$, indeed, 
$\pi_V K^\omega_{p}$ contains an open subset of $V$. 
\end{theorem}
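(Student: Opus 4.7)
The statement has two parts: (i) the dimension equality $\dim_H K^\omega_p = d+\log p/\log M$, which exceeds $k$ because $p>M^{-(d-k)}$; and (ii) the topological conclusion that $\pi_V K^\omega_p$ contains an open subset of $V$ for \emph{every} $k$-plane $V$ simultaneously. Part (i) is classical Mandelbrot-percolation theory: $K^\omega_p$ is the attractor of the random IFS of $M^d$ maps of ratio $1/M$ each retained independently with probability $p$, and the formula $\mathbb{E}\bigl[\sum_i X_i (1/M)^s\bigr]=1$ yields $s=d+\log p/\log M$ almost surely on non-extinction. So I focus on (ii).

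Fix $V$. For $x$ in the interior of $\pi_V[0,1]^d$, the preimage $\pi_V^{-1}(x)\cap[0,1]^d$ is a $(d-k)$-plane slab meeting $\asymp M^{n(d-k)}$ level-$n$ subcubes; let $N_n(x)$ count the retained ones. Along the genealogical tree of cubes hit by $\pi_V^{-1}(x)$, $N_n(x)$ behaves like a supercritical branching process with offspring mean $pM^{d-k}>1$, so a first/second moment argument gives $\mathbb{P}(x\in\pi_V K^\omega_p)\geq q>0$, with $q$ uniform for $x$ in a compact subset of the interior and, importantly, uniform in $V$.

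To upgrade from a pointwise positive probability to covering an open set, tile a small $k$-cube $Q\subset \pi_V[0,1]^d$ by subcubes of side $M^{-n}$ and apply an exponential / large-deviation estimate on $N_n(x)$ at tile centres to show that, with probability tending to one as $n\to\infty$, every tile meets the retained $n$-th generation. Exploiting the self-similarity of the construction from level $n$ onwards --- each retained level-$n$ cube spawns an independent rescaled copy of the whole process --- together with a standard zero-one argument of branching type, one concludes that conditional on $K^\omega_p\neq\emptyset$, $\pi_V K^\omega_p$ almost surely contains a full small $k$-cube of $V$ for the given $V$.

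The hard step is making this uniform over all $V\in G(k,d)$. The plan is to introduce an $M^{-n}$-net in $G(k,d)$: for $V'$ within $M^{-n}$ of $V$, the projections $\pi_V$ and $\pi_{V'}$ agree on level-$n$ cubes up to $O(M^{-n})$, so the "$Q$ is covered" event for $V$ transfers with a controlled margin to all nearby $V'$. Since $G(k,d)$ has dimension $k(d-k)$, the net has polynomial size $\asymp M^{nk(d-k)}$, whereas the large-deviation estimates of the previous paragraph decay fast enough in $n$ to absorb this union bound and apply Borel--Cantelli. A continuity argument then fills in all directions in $G(k,d)$. Obtaining uniform-in-direction large deviations at a rate strong enough to beat the Grassmannian discretization is the technical core of \cite{RS} and \cite{SV}, and is the main obstacle in the argument.
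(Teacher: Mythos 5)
First, note that Theorem \ref{ramssimon} is quoted verbatim from \cite{RS,SV}; the paper offers no proof of it, so there is no internal argument to compare against. What you have written is a reconstruction of the strategy of Rams--Simon and Simon--V\'ag\'o, and at the level of architecture it is faithful to theirs: the fibre $\pi_V^{-1}(x)$ induces a supercritical Galton--Watson-type tree of retained cubes with offspring mean $pM^{d-k}>1$, giving $\mathbb{P}(x\in\pi_V K^\omega_p)>0$ for a fixed direction; one then upgrades pointwise survival to covering a small $k$-cube, and finally discretizes the Grassmannian at scale $M^{-n}$ to make the statement uniform in $V$. The dimension formula in part (i) is indeed classical.

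There are, however, two places where your sketch as stated does not close. The first is the union bound: the net in $G(k,d)$ at level $n$ has size $\asymp M^{nk(d-k)}$ and the tiling of $Q$ contributes another $M^{nk}$ events, i.e.\ the number of events grows \emph{exponentially} in $n$, so a one-shot large-deviation estimate on $N_n(x)$ of a fixed exponential rate need not beat it; the actual argument in \cite{RS,RS2,SV} is a self-improving multiscale induction --- once every point of $Q$ (for every direction in the level-$n$ net) is covered by at least $K_n$ retained cubes, the probability of dropping below $K_{n+1}\approx pM^{d-k}K_n$ anywhere at the next level is at most $(\mbox{net size})\times e^{-cK_n}$, and since $K_n$ grows geometrically this \emph{is} summable. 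Your phrase ``large-deviation estimate at tile centres'' needs to be replaced by this induction, and you must also say how the induction is started (on the survival event one waits until some level $n_0$ at which enough cubes are retained, which costs only conditioning). The second gap is the passage from ``positive probability'' to ``almost surely conditional on $K^\omega_p\neq\emptyset$'': this requires an inheritance/zero-one argument (the complementary event is inherited by the independent rescaled copies spawned by each retained cube, forcing its probability to coincide with the extinction probability), which you gesture at but do not formulate. Neither objection is fatal --- both repairs are exactly what the cited papers carry out --- but as written the proposal is an outline of their proof with its technical core acknowledged rather than supplied.
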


Applying Proposition \ref{prop1} to Theorem \ref{ramssimon} we obtain dimension conservation properties for Mandelbrot percolation.

\begin{theorem}\label{mandper}
Let $1 \leq k \leq d-1$. Let $K^\omega_p \subset \mathbb{R}^d$ be the random set obtained by Mandelbrot percolation on the $d$-dimensional unit cube, using repeated subdivision into $M^d$ subcubes and selecting cubes independently with probability $p> 1/M^{d-k}$. For all $\epsilon>0$, almost surely conditional on  $K^\omega_p \neq \emptyset$, for all $k$-dimensional subspaces $V$,
 \begin{equation*}
\mathcal{L}^k\big\{x\in V: \dim_H \big(K^\omega_p \cap \pi_V^{-1}(x) \big)\geq \dim_H K^\omega_p - k-\epsilon\big\}>0.\end{equation*}
\end{theorem}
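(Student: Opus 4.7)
Set $\alpha = \dim_H K^\omega_p - k - \epsilon = d + \log p / \log M - k - \epsilon$ and $q = M^{-\alpha}$ (we may assume $\epsilon < \dim_H K^\omega_p - k$, so $\alpha > 0$; otherwise the statement reduces directly to Theorem \ref{ramssimon}). A direct computation gives $pq = pM^{-\alpha} = M^{k+\epsilon-d}$, which strictly exceeds $M^{-(d-k)}$, so Theorem \ref{ramssimon} applies with parameter $pq$. The plan is to realize $K^\omega_{pq}$ as a two-stage percolation: first run Mandelbrot percolation with parameter $p$ to produce $K_1 := K^{\omega_1}_p$ (with symbolic image $\Sigma_1$), then, independently, run Mandelbrot percolation with parameter $q$ on top to obtain the further-thinned set $K_1 \cap K^{\omega_2}_q = \Phi(\Sigma_1 \cap \Sigma_2)$; by the observation already recorded in the paper, this combined process has the distribution of a single-stage $pq$-percolation.

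The second-stage symbolic set $\Sigma_2$ is an $\alpha$-random subset of $\Sigma$ in the sense of Section \ref{sec2}: for any cylinder $[\bi]$ at level $n$ (so $|\Phi[\bi]| \sim M^{-n}$), the probability that $[\bi]$ survives the second percolation is at most $q^n = M^{-n\alpha}$, which yields the required bound \eqref{prob}. I would then fix $\omega_1$ and apply Proposition \ref{prop1} with $A = \Sigma_1$ and $\Sigma^\omega = \Sigma_2$; although the proposition is stated for deterministic $A \in \mathcal{B}_\Sigma$, its proof goes through verbatim when $A$ is random and independent of $\Sigma^\omega$, by conditioning on $\omega_1$ throughout. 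The resulting conclusion is exactly $\mathcal{L}^k\{x \in V : \dim_H(K_1 \cap \pi_V^{-1}x) \ge \alpha\} > 0$, which is the statement of the theorem.

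To verify the hypothesis of the proposition, that $\P_{\omega_2}(\mathcal{L}^k(\pi_V(K_1 \cap K^{\omega_2}_q)) > 0 \mid \omega_1) > 0$ for every $V$, I would apply Theorem \ref{ramssimon} to the combined process; crucially, this result provides the positive-projection conclusion for every $V$ at once on a single almost-sure event. A Fubini disintegration in $\omega_1$ then reduces the task to showing $\P_{\omega_2}(K_1 \cap K^{\omega_2}_q \ne \emptyset \mid \omega_1) > 0$ for $\P$-almost every $\omega_1$ with $K_1 \ne \emptyset$. I anticipate this being the main obstacle, to be resolved via Galton--Watson/Lyons tree theory: the symbolic tree of $K_1$ is a GW tree with offspring $\mathrm{Bin}(M^d,p)$ whose Lyons branching number equals its mean $M^d p$ almost surely conditional on non-extinction, and Bernoulli$(q)$-percolation on such a tree survives with positive probability precisely when $qM^d p > 1$, a condition guaranteed by $pq > M^{-(d-k)} \ge M^{-d}$. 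Because Theorem \ref{ramssimon} handles all $V$ uniformly, the Fubini exceptional set of $\omega_1$ is independent of $V$; Proposition \ref{prop1} then delivers the section-dimension inequality for every $V$ simultaneously, completing the proof.
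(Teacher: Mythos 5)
Your proposal is correct and follows essentially the same route as the paper: the same choice of second-stage probability $q=p^{-1}M^{-(d-k-\epsilon)}$, the same identification of the intersection of the two independent percolations with a single $pq$-percolation, and the same application of Theorem \ref{ramssimon} followed by Proposition \ref{prop1} with $A$ the first-stage symbolic set. The only difference is that you supply a Galton--Watson/branching-number justification for the conditional survival claim $\P(\Sigma_1\cap\Sigma_2\neq\emptyset\mid\omega_1)>0$, which the paper states only as a parenthetical remark.
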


\noindent{\it Proof.}
We may represent the heierarchy of $M$-ary subcubes of the unit cube  in symbolic space $\Sigma$ with  an alphabet  $\Lambda$ of $m=M^d$ letters  with $\Phi: \Lambda \to K = [0,1]^d$ the natural cannonical mapping. With notation for percolation as above, let the probability distribution $(X_1, \ldots, X_m)$ on $\Lambda$ be given by $\mathbb{P}(X_i= 1) = p, \,  \mathbb{P}(X_i= 0) = 1-p$, independently for $i=1,\ldots,m$. This defines a random set $\Sigma^\omega_p \subset \Sigma$ such that $K^\omega_p =\Phi(\Sigma^\omega_p)$ is the Mandelbrot percolation set, with $\dim_H K^\omega_p =  d+ \log p/ \log M$ conditional on non-extinction. 
Now let $p' = p^{-1}M^{-(d-k-\epsilon)}$ and let $\Sigma^{\omega'}_{p'} \subset \Sigma$ be an independent random set defined in the same way but using probability $p'$; we use $\Sigma^{\omega'}_{p'}$ to `probe' the dimensions of $K^\omega_p$.

The random set  $\Sigma^\omega_p \cap \Sigma^{\omega'}_{p'}$ has the same distribution as  a random set $\Sigma^{\omega''}_{pp'}$, constructed  in the same way with probability $pp'$.
Thus, conditional on $\Sigma^\omega_p \cap \Sigma^{\omega'}_{p'}\neq \emptyset$,
 $\dim_H \Phi(\Sigma^\omega_p \cap \Sigma^{\omega'}_{p'})=  d+ \log pp'/ \log M = k  +\epsilon$ almost surely, so by Theorem \ref{ramssimon}, almost surely,
  \begin{equation}\label{posmes}
{\mathcal L}^k \big(\pi_V (\Phi (\Sigma^\omega_p \cap \Sigma^{\omega'}_{p'})\big)>0
 \end{equation}
for all $k$-dimensional subspaces $V$. 
Using independence and Fubini's theorem,  conditional on  $\Sigma^{\omega}_{p} \neq \emptyset$, almost surely conditional on $\Sigma^\omega_p \cap \Sigma^{\omega'}_{p'} \neq \emptyset$, inequality (\ref{posmes}) holds for all $V$
(Note that, conditional on $\Sigma^{\omega}_{p} \neq \emptyset$, 
$ \P(\Sigma^\omega_p \cap \Sigma^{\omega'}_{p'} \neq \emptyset)>0$.)
 
We may regard $\Sigma^{\omega'}_{p'}$ as an $\alpha$-random subset of $\Sigma$ where 
$\alpha = -\log p'/\log M  = \log p/\log M +d-k -\epsilon$.
Taking $A=\Sigma^\omega_p$ in   Proposition \ref{prop1} (so in the notation there $A^\omega =\Sigma^\omega_p \cap \Sigma^{\omega'}_{p'}$) we conclude that, conditional on  $\Sigma^{\omega}_{p} \neq \emptyset$, 
$$\mathcal{L}^k\big\{x\in V: \dim_H \big(\Phi(\Sigma^\omega_p) \cap \pi_V^{-1}(x) \big)\geq \alpha\}>0,$$
and  the conclusion follows, noting that $\Phi(\Sigma^\omega_p) = K^\omega_p$.
\qed
\medskip

\section{Absolute continuity of projections of random self-similar measures}
\setcounter{theorem}{0}
We now show that we have weak dimension conservation for the Hausdorff dimension of sections of plane self-similar sets in all directions apart from a set of directions of Hausdorff dimension $0$ (Theorem \ref{thmhdgen}).
To achieve this we use Proposition \ref{prop1} together with a result on the absolute continuity of projections of a class of random measures supported by random subsets of self-similar sets (Theorem \ref{absss}), which is a  extension of a  result  of Shmerkin and Solomyak \cite{SS} for deterministic measures. We do this first for self-similar sets where the defining similarities are translates of each other. Then a device of Peres and Shmerkin \cite{PS} enables us to extend the conclusion to general similarities.
 
Let 
\begin{equation}\label{mes1}
\mathcal{I}=\{f_i=rR_\theta\cdot +a_i\}_{i=1}^m
\end{equation}
 be an IFS in the plane, where $r\in(0,1)$ and $R_\theta$ is the orthogonal rotation with an angle $\theta\in [0,2\pi)$. As before $\Phi:\Sigma \mapsto \mathbb{R}^2$ is the canonical mapping from the symbolic space to the plane.

Let $(\Omega,\mathcal{A},\mathbb{P})$ be a probability space. Let
\begin{equation}\label{ifsr}
X:\Omega\mapsto \Big\{(p_1,\ldots,p_m)\in[0,1]^m: \sum_{i=1}^m p_i=1\Big\}
\end{equation}
be a random probability vector allowing zero entries. For $n\in \mathbb{N}$ denote by
\[
\chi_n: \Omega^\mathbb{N}\to \Omega
\]
the projection from $\Omega^\mathbb{N}$ onto its $n$th coordinate. Then $\mathcal{X}=\{X^{(n)}=X\circ \chi_n\}_{n\in \mathbb{N}}$ forms a i.i.d. sequence on the probability space $(\Omega^\mathbb{N},\mathcal{A}_\mathcal{X}, \mathbb{P}^{\otimes \mathbb{N}})$, where $\mathcal{A}_\mathcal{X}\subset \mathcal{A}^{\otimes \mathbb{N}}$ is the $\sigma$-algebra generated by $\mathcal{X}$. Let $\nu$ be the random probability measure on $\Sigma$ defined  by
\begin{equation}\label{mes2}
\nu([i_1\ldots i_k])=X^{(1)}_{i_1}\cdots X^{(k)}_{i_k} \ \mbox{ for all }\ i_1\ldots i_k \in \Sigma_*.
\end{equation}

\begin{remark}
Note that the measure $\nu$ is not the same as the random cascade measures studied, for example, in \cite{FJ}. Here for $k\ge 1$ the ratio 
$\nu([i_1\ldots i_k i_{k+1}]):\nu([i_1\ldots i_k])$ is the same for all $i_1\ldots i_k \in \Lambda^k$.
The reason why we consider this particular random measure is that its Fourier transform has a convolution structure, which is essential for the proof of absolute continuity in Theorem \ref{absss}.
\end{remark}

Let $\mathbb{Q}$ be the probability measure on the product space $\Sigma\times \Omega^\mathbb{N}$ given by
\[
\mathbb{Q}(A)=\int_{\Omega^\mathbb{N}}\int_\Sigma \mathbf{1}_A(\bi,\boldsymbol{\omega}) \, \nu(\mathrm{d}\bi)\, \mathbb{P}^{\otimes\mathbb{N}}(\mathrm{d}\boldsymbol{\omega}) \ \mbox{ for all } A\in \mathcal{B}_\Sigma\otimes\mathcal{A}_\mathcal{X}.
\]
Denote by $\sigma:\Sigma\times \Omega^\mathbb{N}\mapsto \Sigma\times \Omega^\mathbb{N}$ the left shift \[
\sigma(i_1i_2\ldots, \omega_1\omega_2\ldots)=(i_2i_3\ldots, \omega_2\omega_3\ldots).
\]

The next proposition and theorem are direct analogues of those obtained in \cite{FJ} for random cascade measures.

\begin{proposition}\label{mixing}
The dynamical system $(\Sigma\times \Omega^\mathbb{N},\mathcal{B}_\Sigma\otimes\mathcal{A}_\mathcal{X},\sigma, \mathbb{Q})$ is mixing.
\end{proposition}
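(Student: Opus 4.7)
The plan is to verify the mixing condition
\[
\lim_{n\to\infty}\mathbb{Q}(A\cap \sigma^{-n}B)=\mathbb{Q}(A)\,\mathbb{Q}(B)
\]
on an algebra generating $\mathcal{B}_\Sigma\otimes \mathcal{A}_\mathcal{X}$, and then extend to the full $\sigma$-algebra by $L^1(\mathbb{Q})$-approximation. The natural generating algebra $\mathcal{R}$ consists of finite disjoint unions of rectangles $[\mathbf{i}]\times C$ with $\mathbf{i}\in \Sigma_*$ and $C\in\mathcal{A}_\mathcal{X}$ depending on only finitely many coordinates of $\boldsymbol{\omega}$. Taking $A=\Sigma\times \Omega^{\mathbb{N}}$ in the calculation below also yields $\mathbb{Q}\circ\sigma^{-1}=\mathbb{Q}$, so that the system really is measure-preserving.

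The heart of the argument is a telescoping identity for $\nu$. Fix $A=[\mathbf{i}]\times C$ with $|\mathbf{i}|=k$ and $B=[\mathbf{j}]\times D$ with $|\mathbf{j}|=l$, and let $M$ bound the number of $\boldsymbol{\omega}$-coordinates on which $C$ and $D$ depend. For $n\geq k$ the $\Sigma$-section
\[
E_n := \big\{\mathbf{i}'\in\Sigma : i'_1\ldots i'_k=\mathbf{i},\ i'_{n+1}\ldots i'_{n+l}=\mathbf{j}\big\}
\]
decomposes disjointly as $\bigsqcup_{\mathbf{k}\in\Lambda^{n-k}}[\mathbf{i}\mathbf{k}\mathbf{j}]$. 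Applying \eqref{mes2} to each summand and summing over $\mathbf{k}$, the intermediate factor
\[
\prod_{s=k+1}^{n}\sum_{i=1}^{m}X^{(s)}_i
\]
collapses to $1$ because each $X^{(s)}$ is a probability vector by \eqref{ifsr}; hence
\[
\nu(E_n)=\nu([\mathbf{i}])\cdot X^{(n+1)}_{j_1}\cdots X^{(n+l)}_{j_l}.
\]

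Substituting into the definition of $\mathbb{Q}$ yields
\[
\mathbb{Q}(A\cap\sigma^{-n}B)=\mathbb{E}\Big[\nu([\mathbf{i}])\,\mathbf{1}_C(\boldsymbol{\omega})\cdot X^{(n+1)}_{j_1}\cdots X^{(n+l)}_{j_l}\,\mathbf{1}_D(\omega_{n+1}\omega_{n+2}\ldots)\Big].
\]
For $n\geq \max(k,M)$, the first factor is measurable with respect to $X^{(1)},\ldots,X^{(\max(k,M))}$ while the second depends only on $X^{(n+1)},X^{(n+2)},\ldots$, so by the i.i.d.\ structure of $\mathcal{X}$ the expectation factors. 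Shift-stationarity of $\mathbb{P}^{\otimes\mathbb{N}}$ identifies the right factor with $\mathbb{Q}(B)$ and the left with $\mathbb{Q}(A)$, giving the exact identity $\mathbb{Q}(A\cap\sigma^{-n}B)=\mathbb{Q}(A)\mathbb{Q}(B)$ for all sufficiently large $n$. Standard $L^1(\mathbb{Q})$-approximation of arbitrary measurable sets by elements of $\mathcal{R}$ then upgrades this to the full $\sigma$-algebra. The only genuine care required is the bookkeeping of word-indices in the telescoping step; there is no analytic obstacle.
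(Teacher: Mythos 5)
Your proof is correct and follows essentially the same route as the paper: verify the mixing identity on a generating family of finite-dimensional rectangles and use the i.i.d.\ structure of $\mathcal{X}$ to factor the expectation once the index gap is large enough. The paper simply asserts that $\sigma^{-n}(A)$ and $B$ become independent for large $n$ (deferring details to the analogous result in \cite{FJ}), whereas you make explicit the telescoping step $\sum_i X^{(s)}_i=1$ that justifies this; the content is the same.
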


\begin{proof}
The proof is similar to that of  \cite[Proposition 2.2]{FJ}.  Let $\mathcal{B}$ be the semialgebra of $\mathcal{B}_\Sigma\otimes\mathcal{A}_\mathcal{X}$ consisting of sets of the form
\[
\{(\bi,\boldsymbol{\omega}): \bi |_k={\bf j},X^{(b)}_a\in B_a^b\}
\]
for $k\in \mathbb{N}$, ${\bf j}\in \Lambda^k$, $b\in \{1,\ldots,k\}$, $a\in \Lambda$ and $B^b_a$ Borel subsets of $[0,1]$.  It is clear that $\mathcal{B}$ generates $\mathcal{B}_\Sigma\otimes\mathcal{A}_\mathcal{X}$, so we only need to verify that $\lim_{n\to\infty}\mathbb{Q}(\sigma^{-n}(A)\cap B)=\mathbb{Q}(A)\mathbb{Q}(B)$ for $A,B\in \mathcal{B}$. This follows since  by the construction of $\mathcal{B}$, given $A,B\in\mathcal{B}$, there exists a positive integer $n_0$ such that $\sigma^{-n}(A)$ and $B$ are independent for all $n\ge n_0$. 
\end{proof}

Let $\pi_\beta:\mathbb{R}^2\mapsto \mathbb{R}^2$ be orthogonal projection onto the line making an angle $\beta$ with the $x$-axis. Write $\mu=\Phi\nu$ for the measure defined by $\mu(A) = \nu(\Phi^{-1}A)$. Starting from Proposition \ref{mixing} and proceeding just as in \cite{FJ}, we obtain the following projection property.

\begin{theorem}\label{proj}
Suppose that $\theta/\pi$ is irrational. Then almost surely, for all $\beta\in [0,\pi)$
\[
\dim_H \pi_\beta \mu=\min(1,\dim_H \mu).
\]
\end{theorem}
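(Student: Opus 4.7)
I will follow the template of \cite{FJ}, where Proposition \ref{mixing} supplies the ergodic-theoretic machinery; what is new here is that irrationality of $\theta/\pi$ gives dense orbits in $[0,\pi)$ under rotation by $\theta$, which is what will let us upgrade from Lebesgue-a.e.\ $\beta$ to every $\beta$.

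The first step is to record the recursive self-similarity of $\mu$. From (\ref{mes2}), $\nu$ splits according to the first coordinate as $\nu = \sum_i X^{(1)}_i\,\delta_i\otimes\nu'$, where $\nu'$ is the random product measure built from $X^{(2)},X^{(3)},\ldots$; note that $\nu'$ has the same distribution as $\nu$ by the definition of $\mathbb{Q}$. Pushing forward by $\Phi$ and using that every $f_i$ has the same rotational part $R_\theta$ gives
\begin{equation*}
\pi_\beta\mu = \sum_{i=1}^m X^{(1)}_i\bigl(\pi_\beta a_i + r\,\pi_{\beta-\theta}\mu'\bigr),
\end{equation*}
so $\pi_\beta\mu$ is a weighted superposition of translated, $r$-scaled copies of $\pi_{\beta-\theta}\mu'$, and $\mu'$ has the same distribution as $\mu$.

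Next, exactly as in \cite{FJ}, I would use Proposition \ref{mixing} (in particular ergodicity) together with Birkhoff's theorem applied to local dimensions of $\pi_\beta\mu$ at $\mathbb{Q}$-typical points to conclude: for each fixed $\beta\in[0,\pi)$, the random variable $\dim_H\pi_\beta\mu$ equals a deterministic constant $s(\beta)\leq\min(1,\dim_H\mu)$ almost surely. A pathwise application of Marstrand's projection theorem combined with Fubini then gives $s(\beta)=\min(1,\dim_H\mu)$ for Lebesgue-a.e.\ $\beta$.

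Finally, the recursion above, together with the distributional equality of $\mu'$ and $\mu$, forces $s(\beta)=s(\beta-\theta)$ for every $\beta$, so $s$ is invariant under the rotation $T:\beta\mapsto\beta-\theta\pmod\pi$. Since $\theta/\pi$ is irrational, every $T$-orbit is equidistributed in $[0,\pi)$ by Weyl, so every orbit meets the full-Lebesgue-measure set where $s$ attains its maximum value $\min(1,\dim_H\mu)$; $T$-invariance then forces $s(\beta)=\min(1,\dim_H\mu)$ for every $\beta$. The remaining (and most delicate) issue is swapping the quantifiers ``for every $\beta$'' and ``almost surely''; I would handle this exactly as in \cite{FJ} by iterating the recursion on a single full-probability event and exploiting that, for each realisation, the weighted sum representing $\pi_\beta\mu$ depends continuously on $\beta$, so that the conclusion, once established on a countable dense set of directions, propagates to all directions. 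The main obstacle is precisely this uniformity argument, where the self-similar recursion must be used pathwise rather than only distributionally.
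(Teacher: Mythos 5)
Your first two steps (the self-similarity recursion for $\pi_\beta\mu$, and the a.e.-$\beta$ statement via ergodicity, Marstrand and Fubini) are sound and consistent with the route the paper takes, which is simply to invoke the machinery of \cite{FJ}. The genuine gap is in your final step, and it sits exactly where all the difficulty of an ``all $\beta$'' projection theorem lives. First, the orbit $\{\beta-n\theta \bmod \pi\}_{n\ge 0}$ is countable, hence Lebesgue-null; equidistribution says nothing about whether it meets a prescribed full-measure set (compare: $\mathbb{Q}$ is dense and equidistributed but misses the full-measure set of irrationals). So $T$-invariance of $s$ together with ``$s=\min(1,\dim_H\mu)$ a.e.'' does not force $s(\beta)$ to attain the maximum at any particular $\beta$. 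Second, your proposed repair --- propagating the conclusion from a countable dense set of directions to all directions using continuity of $\beta\mapsto\pi_\beta\mu$ --- fails because the Hausdorff dimension of a measure is not continuous (nor semicontinuous in the useful direction) under weak-$*$ convergence; an arbitrarily small change of direction can collapse the dimension of the projection.

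What \cite{FJ} (following Hochman--Shmerkin) actually supplies, and what your argument is missing, is a \emph{lower semicontinuous} function $E(\beta)$ that lower-bounds $\dim_H\pi_\beta\mu$ simultaneously for all $\beta$ on a single full-probability event (a local entropy average along the CP-chain furnished by Proposition \ref{mixing}), and which is invariant under the rotation action. Given that the closed group generated by $R_\theta$ is all of $SO(2,\mathbb{R})$ when $\theta/\pi$ is irrational, invariance plus lower semicontinuity forces $E$ to be constant: for any $\beta,\beta'$ choose group elements $g_n$ with $g_n\beta\to\beta'$, so that $E(\beta')\le\liminf_n E(g_n\beta)=E(\beta)$, and symmetrically. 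The a.e. Marstrand value then pins the constant at $\min(1,\dim_H\mu)$. Without some such regularity of (a lower bound for) $\beta\mapsto\dim_H\pi_\beta\mu$ holding pathwise for all directions at once, the passage from almost every direction to every direction cannot be made, so as written your proof does not close.
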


\begin{proof}
When $\theta/\pi$ is irrational, the closed rotation group $G$ generated by $R_\theta$ is the whole group $SO(2,\mathbb{R})$. Given this, the proof follows exactly the same lines as in  of \cite[Sections 2.7 \& 4]{FJ}. In particular, since $G=SO(2,\mathbb{R})$, the dimension of the projections equals the maximal possible value, just as in   \cite[Corollary 4.6]{FJ},
\end{proof}

Theorem \ref{absss} below, a random analogue of  \cite[Theorem B]{SS},  gives conditions  for the projections of the random measure $\mu$ to be almost surely absolutely continuous in all directions except for a set $E$ of Hausdorff dimension $0$. First, in the following lemma, we specify the set $E$ and verify that its dimension is $0$. We adapt the delicate estimates of \cite[Lemmas 3.2 \& 3.4]{SS}  to our requirements,  in particular obtaining  estimates for the dimensions of $E_{q,k}(\delta,N)$ that do not depend on $q$ or $k$.

For $x\in\mathbb{R}$ let $\|x\|=\min\{|x-j|:j\in\mathbb{Z}\}$ and we write  $[N]=\{1,\ldots,N\}$ for each positive integer $N$. 

\begin{lemma}\label{cover}
Fix $r\in(0,1)$,  $\gamma\in\mathbb{R}$, $b\in(0,\infty)$ and $\theta\in \mathbb{R}$ with $\theta/\pi$ irrational. 
For $\delta\in(0,\frac{1}{2})$ and integers $q,k\ge 1$, $N\ge 2$, let $E_{q,k}(\delta,N)$ be the set of all  $\beta\in[0,\pi)$ such that
\[
 \max_{\tau\in[1,r^{-qk}]}\frac{1}{N}\#\Big\{n\in[N]:\|b\tau r^{q-qk(N-n)}\cos(\beta + \gamma - nqk\theta)\|\le \frac{r^{2qk}}{15} \Big\}>1-\delta,
 \]
and let
\[
E= \bigcap_{i\ge 3} \bigcup_{q,k\ge 1}\limsup_{N\to\infty}E_{q,k}(1/i,N).
\]
Then $\dim_H E = 0$.
 \end{lemma}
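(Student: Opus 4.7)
The plan is to exploit the bracketing structure of $E$: since $\dim_H$ is countably stable under unions and monotone under intersection, it suffices to establish a bound
\[
\dim_H \limsup_{N\to\infty} E_{q,k}(1/i,N) \leq s_i
\]
that holds \emph{uniformly} in $(q,k)$, with $s_i\to 0$ as $i\to\infty$. Granted this, $\dim_H \bigcup_{q,k} \limsup_N E_{q,k}(1/i,N) \le s_i$, and since the intersection over $i$ is contained in each of these sets, $\dim_H E\leq \inf_i s_i = 0$. The whole argument is therefore reduced to producing an efficient cover of $E_{q,k}(\delta,N)$ whose Hausdorff $s$-content decays in $N$ with an exponent $s$ depending only on $\delta$.

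The cover is built in three layers. First, discretize the parameter $\tau \in [1, r^{-qk}]$: because $\tau \mapsto b\tau r^{q-qk(N-n)}\cos(\cdot)$ is Lipschitz, replacing $\tau$ by a nearby value perturbs the quantity inside $\|\cdot\|$ by a controlled amount. Sampling $\tau$ at a grid of spacing $\sim r^{C qk N}$ reduces the "$\exists\,\tau\in[1,r^{-qk}]$" to "$\exists\,\tau \in T$" with $\#T \lesssim r^{-C'qkN}$, at the cost of relaxing the tolerance $r^{2qk}/15$ to, say, $r^{2qk}/7$. Second, for each fixed $\tau\in T$ write
\[
B_n(\tau) = \Bigl\{\beta\in[0,\pi) : \|b\tau r^{q-qk(N-n)}\cos(\beta + \gamma - nqk\theta)\|\le r^{2qk}/7\Bigr\};
\]
the condition defining $E_{q,k}(\delta,N)$ forces $\beta$ into $\bigcap_{n\in S} B_n(\tau)$ for some $S \subset [N]$ with $|S| \geq (1-\delta)N$, giving a combinatorial factor $\binom{N}{\lfloor\delta N\rfloor} \leq 2^{H(\delta) N}$ (with $H$ the binary entropy). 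Third, estimate the content of $\bigcap_{n\in S} B_n(\tau)$: each $B_n(\tau)$ is a union of intervals of total Lebesgue measure $\lesssim r^{2qk}$ (change of variables in $\cos$ together with the elementary estimate for the set $\{x : \|cx\|<\varepsilon\}$), and since $\theta/\pi$ is irrational the rotations $nqk\theta\bmod\pi$ equidistribute, so the sets $B_n(\tau)$ are rotationally "generic". This allows one to follow the Shmerkin--Solomyak bookkeeping of \cite[Lemmas 3.2 \& 3.4]{SS} and obtain a cover of $\bigcap_{n\in S} B_n(\tau)$ by intervals of a fine common scale whose total $s$-Hausdorff content decays like $r^{c(1-\delta) s q k N}$ for an absolute $c>0$.

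Combining these three layers, the $s$-Hausdorff content of $E_{q,k}(\delta, N)$ is bounded by
\[
r^{-C'qkN}\cdot 2^{H(\delta) N}\cdot r^{c(1-\delta) s\, qkN},
\]
and one chooses $s=s(\delta)$ proportional to $\delta$ (and absorbing $H(\delta)$) so that the expression tends to $0$ as $N\to\infty$ with the decay rate being \emph{uniform} in $(q,k)$. By Borel--Cantelli for Hausdorff content, $\mathcal{H}^s(\limsup_N E_{q,k}(\delta,N)) = 0$, yielding the desired uniform bound with $s_i := s(1/i) \to 0$.

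The main obstacle is precisely this uniformity. The discretization of $\tau$ introduces a factor $r^{-C'qkN}$ that grows exponentially in $qk$, and this must be absorbed by the tiny intersection content $r^{c(1-\delta)s\,qkN}$; the balance requires a careful geometric estimate of $\bigcap_{n\in S}B_n(\tau)$ that avoids any further $qk$-dependence in the prefactor. Adapting the Shmerkin--Solomyak argument to achieve this -- in particular, controlling the loss coming from boundary values of $\cos$ where the pre-images of short intervals are long -- is the delicate step that produces the bound free of $q$ and $k$.
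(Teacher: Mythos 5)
Your outer reduction is exactly the paper's: bound $\dim_H\limsup_N E_{q,k}(\delta,N)$ by a quantity $s(\delta)$ that is \emph{uniform} in $(q,k)$ and tends to $0$, then use countable stability. The gap is in the covering construction. Your three factors multiply to
$r^{-C'qkN}\cdot 2^{H(\delta)N}\cdot r^{c(1-\delta)s\,qkN}$, and the covering scale is $r^{qkN}$; dividing exponential rates by $qk\log(1/r)$, the $\tau$-discretization alone contributes $C'$ to the dimension bound, and $C'$ is of order $1$: to keep the perturbation below the tolerance $r^{2qk}/15$ for the term $n=1$, where the Lipschitz constant in $\tau$ is $\sim b\,r^{q-qk(N-1)}$, the grid spacing must be $\sim r^{qkN}$, so $\#T\sim r^{-qkN}$. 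This cost is independent of $\delta$, whereas the compensating decay rate $c(1-\delta)s$ vanishes as $s\to 0$; the product therefore does not tend to $0$ for small $s$, and the scheme cannot produce $s(\delta)\to 0$ unless you additionally prove that all but an $r^{C'qkN}$-fraction of the grid points $\tau$ contribute an empty intersection --- a claim your outline neither states nor supports. A second problem is the appeal to equidistribution of $nqk\theta\bmod\pi$: this has no rate uniform in $(q,k)$ (the Diophantine quality of $qk\theta$ varies with $q,k$), so it cannot yield the $(q,k)$-independent estimate the lemma requires.

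The paper avoids both issues by a different mechanism. Writing $c\tau\ell^{N-n}\cos(\beta+\gamma-n\alpha)=k_n+\epsilon_n$ with $\ell=r^{-qk}$, $\alpha=qk\theta$, the identity $c_1w_{2\alpha}+c_2w_{\alpha}=w_0$ (with $|c_1|,|c_2|\le 2$, using only $\sin\alpha\neq 0$, i.e.\ irrationality of $\theta/\pi$ --- no equidistribution) gives the recurrence $c_1\ell^2(k_{n+2}+\epsilon_{n+2})+c_2\ell(k_{n+1}+\epsilon_{n+1})=k_n+\epsilon_n$. When three consecutive $\epsilon$'s are small, $k_n$ is \emph{determined} by $(k_{n+1},k_{n+2})$; otherwise there are at most $7\ell^2$ choices, and this happens at most $3\delta N+3$ times. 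Crucially, once the initial pair $(k_N,k_{N-1})$ is fixed --- only $O(\ell^3)$ possibilities --- the value of $\tau$ is irrelevant, so no discretization of $\tau$ is needed at all. Each admissible integer sequence localizes $\beta$ to an interval of length $C'\ell^{-N}$, giving a count of $\mathrm{e}^{C\sqrt{\delta}N}\,\ell^3(7\ell^2)^{3\delta N+3}$ balls and hence a dimension bound $O(\sqrt{\delta})$ uniform in $q,k$. Without this (or an equivalent) device for eliminating $\tau$ and for extracting quantitative near-independence of the conditions across $n$, your outline does not close.
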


\begin{proof}
For the time being we fix the integers $q,k,N\ge 1$ and abbreviate $c:=br^{q}$, $\ell:=r^{-qk}$ and $\alpha:=qk\theta$. Note that $r^{2qk}/15=1/(15\ell^2)$. Let $\tau \in [1,\ell]$. 

Given $\beta \in [0,\pi)$, for each $n=1,\ldots,N$ write
\begin{equation}\label{knen}
c\tau \ell^{N-n}\cos(\beta + \gamma - n\alpha)=k_n+\epsilon_n, \text{ where } k_n\in \mathbb{Z}\text{ and } \epsilon_n\in[-1/2,1/2).
\end{equation}
For $x\in\mathbb{R}$ let $w_x=(\cos x,\sin x)$. Since $\alpha/\pi$ is irrational, the unique solution of the equation
\[
c_{1}w_{2\alpha}+c_{2}w_{\alpha}
=w_{0},
\]
 is $c_{1}=-1$ and $c_{2}=2\cos \alpha$. Clearly $|c_{1}|, |c_{2}|\le 2$.

Applying the formula $\langle w_{x},w_{\beta+\gamma-n\alpha}\rangle=\cos(\beta+\gamma- n\alpha-x)$ for $x=2\alpha,\alpha,0$ and using \eqref{knen} we get that
\begin{equation}\label{kn12}
c_{1}\ell^2(k_{n+2}+\epsilon_{n+2})+c_{2}\ell(k_{n+1}+\epsilon_{n+1})=k_n+\epsilon_n.
\end{equation}
This implies that if
\[
\max\{|\epsilon_n|,|\epsilon_{n+1}|,|\epsilon_{n+2}|\}\le 1/(15 \ell^{2}) \le 1/ \big(3(2\ell^2+2\ell+1)\big),
\]
then
\[
|c_{1}\ell^2k_{n+2}+c_{2}\ell k_{n+1}-k_n|<\textstyle{\frac{1}{2}},
\]
which means that $k_{n+2}$ and $k_{n+1}$ uniquely determine $k_{n}$. On the other hand, 
\[
|c_{1}\ell^2\epsilon_{n+2}+c_{2}\ell\epsilon_{n+1}-\epsilon_n|\le\ell^2+\ell+1.
\]
Hence for fixed $k_{n+2}$ and $k_{n+1}$, there are at most $\lfloor 2(\ell^2+\ell+1)+1\rfloor \le 7\ell^2$  possible values of $k_n$. Also, from  \eqref{knen}, there are at most $(2c \ell+1)(2c \ell^2+1)\le (2b+1)^2 \ell^3$  possible pairs of $(k_N,k_{N-1})$. 

For $\delta\in(0,\frac{1}{2})$ denote by $[N]_\delta$ the set of all subsets of $[N]$ with cardinality  no less than $(1-\delta)N$. For $A\in[N]_\delta$ let $\widetilde{A}:=\{0\le n\le N-2:n+2,n+1,n\in A\}$. Then $\#\widetilde{A}\ge (1-3\delta)N-3$. This implies that the number of possible sequences $(k_n)_{n=0}^N$ corresponding to  $\beta\in[0,\pi)$ for which   $|\epsilon_n|\le 1/(15 \ell^{2}) $ in (\ref{knen}) for all $n\in A$, is bounded above by
\[
(2b+1)^2 \ell^3(7\ell^2)^{3\delta N +3}.
\]
Note that once $(k_N,k_{N-1})$ is given, the possible values of the remaining $k_n$ are determined by \eqref{kn12}, hence the value of $\tau\in [1,\ell]$ is irrelevant. Then, by Chernoff's entropy inequality for  binomial sums, see \cite{ES}, or alternatively using Stirling's approximation, 
\[
\# [N]_\delta \leq \sum_{p=0}^{\lfloor \delta N \rfloor} \binom{N}{p} 
\leq 2^{N[-\delta\log\delta -(1-\delta)\log(1-\delta)]}\leq \mathrm{e}^{C\sqrt{\delta}N},
\]
for all $N$ and $\delta \in (0,\frac{1}{2})$, where $C$ is a universal constant.

Combining these estimates, the number of possible sequences $(k_n)_{n=1}^N$ corresponding to $\beta\in [0,\pi)$ satisfying
\[
 \max_{\tau\in[1,\ell]}\frac{1}{N}\#\big\{n\in[N]:\|c \tau \ell^{N-n}\cos(\beta+\gamma-n\alpha)\|\le 
 1/(15\ell^{2})  \big\}>1-\delta,
\]
is bounded above by
\[
 \mathrm{e}^{C\sqrt{\delta}N} (2b+1)^2 \ell^3(7\ell^2)^{3\delta N+3}.
\]

From \eqref{knen}, identically 
\[
\beta+\gamma -n\alpha=\tan^{-1}\Big(\frac{\ell(k_{n+1}+\epsilon_{n+1})}{(k_n+\epsilon_n)\sin\alpha}-\cot\alpha\Big).
\]
Since $\alpha/\pi$ is irrational, by estimating the derivatives of the function
\[
f(x)=\tan^{-1}\big((\ell/\sin\alpha)\,  x-\cot\alpha\big),
\]
there is a constant $C'$ depending only on $\ell$ and $\alpha$ such that
\[
\beta\in B\big(j\alpha-\gamma+f(k_{j+1}/k_j), C'\ell^{-N}\big)
\]
where $j$ may be $1$ or $2$ (to ensure that $k_1$ and  $k_2$ are not both 0 when $N$ is sufficiently large). Hence the set $E_{q,k}(\delta,N)$
can be covered by 
$$2\mathrm{e}^{C\sqrt{\delta}N} (2b+1)^2 \ell^3(7\ell^2)^{3\delta N+3}=2\mathrm{e}^{C\sqrt{\delta}N} (2b+1)^2 r^{-3qk}(7r^{-2qk})^{3\delta N+3}$$
balls of radius $C'\ell^{-N}=C'r^{qkN}$.

Using these coverings, it follows that
\begin{eqnarray*}
\dim_H\big( \limsup_{N\to \infty} E_{q,k}(\delta,N)\big)&\leq& 
\frac{C\sqrt{\delta}+3\delta(\log 7-2qk\log r)}{-qk\log r}\\
&\leq& \big(6 +(C+3\log 7)/-\log r \big) \sqrt{\delta}.
\end{eqnarray*}
By countable stability of Hausdorff dimension,  for $i\ge 3$,
\[
\dim_H \bigcup_{q,k\ge 1}\limsup_{N\to\infty}E_{q,k}(1/i,N)\leq \big(6 +(C+3\log 7)/-\log r \big) /\sqrt{i},
\]
giving the conclusion.
\end{proof}

Here is the theorem on the absolute continuity of projections of random measures in all but a small set of exceptional directions  when the underlying similarities are translates of each other. The proof  uses Fourier transforms along  the lines of  \cite[Theorem B]{SS}.

\begin{theorem}\label{absss}
Suppose that $\theta/\pi$ is irrational and let $\mathcal{I}$ be an IFS of the form  \eqref{mes1} satisfying OSC. Then  there exists a set $E\subset [0,\pi)$ with $\dim_H E = 0$ such that, for every random self-similar measure  $\mu=\Phi\nu$ with respect to $\mathcal{I}$ of the form defined by \eqref{ifsr}--\eqref{mes2} and satisfying
\begin{equation}\label{uni}
\mathbb{P}(\mbox{\rm there exist } i,j \in\Lambda \  \mbox{\rm  such that } X_i, X_j\ge p_*)=1
\end{equation}
for some $p_*>0$ and
\begin{equation}\label{dim>1}
\mathbb{P}(\dim_H \mu=s)=1
\end{equation}
for some $s>1$, almost surely for all $\beta\in [0,\pi)\setminus E$, the projected measure $\pi_\beta \mu$ is absolutely continuous with respect to Lebesgue measure.
\end{theorem}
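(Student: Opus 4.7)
\noindent\textit{Proof plan.} The approach adapts the Fourier-analytic strategy of \cite[Theorem B]{SS} to the random measure $\mu$, exploiting the convolution structure noted in the remark. The target is to show that for each $\beta \notin E$, almost surely $\widehat{\pi_\beta\mu} \in L^2(\mathbb{R})$; Plancherel then delivers an $L^2$ density for $\pi_\beta \mu$, so $\pi_\beta\mu$ is absolutely continuous. Because every map in $\mathcal{I}$ has the same linear part $rR_\theta$, expanding the cylinder definition \eqref{mes2} gives the product representation
\[
\widehat{\pi_\beta\mu}(\xi) \;=\; \prod_{n=1}^{\infty}P_n(\xi,\beta), \qquad P_n(\xi,\beta)\;=\;\sum_{j=1}^{m} X_j^{(n)}\, e^{-\ri\xi r^{n-1}\langle w_{\beta-(n-1)\theta},\, a_j\rangle},
\]
with $w_\alpha=(\cos\alpha,\sin\alpha)$; crucially, each factor $P_n$ depends only on the independent vector $X^{(n)}$.

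By OSC one fixes two indices $i_0,j_0$ with $a_{i_0}\neq a_{j_0}$, sets $b=|a_{i_0}-a_{j_0}|$ and lets $\gamma$ be the argument of $a_{i_0}-a_{j_0}$. The two-digit hypothesis \eqref{uni} provides, almost surely in every factor, two indices of weight at least $p_*$, and a direct computation of $|P_n|^2$ retaining only those two summands yields
\[
|P_n(\xi,\beta)|^2 \;\le\; 1 \;-\; c\, p_*^2\, \bigl\|b\,\xi r^{n-1}\cos(\beta+\gamma-(n-1)\theta)\bigr\|^2
\]
for an absolute constant $c>0$. This is precisely the one-dimensional Diophantine quantity appearing in Lemma~\ref{cover}, which is why $E_{q,k}(\delta,N)$ is built out of these expressions.

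Now fix $\beta\notin E$. There exists $i_0 \ge 3$ such that for every $q,k \ge 1$ one has $\beta\notin E_{q,k}(1/i_0,N)$ for all sufficiently large $N$, so uniformly in $\tau\in[1,r^{-qk}]$ at least a fraction $1/i_0$ of the $N$ blocks of $qk$ consecutive Fourier factors satisfy $\|\,\cdot\,\| > r^{2qk}/15$. For $|\xi|$ in the dyadic ring $[r^{q-qk(N+1)},r^{q-qkN}]$, writing $\xi = \tau r^{q-qkN}$ and feeding the bound above into every $qk$-th factor of the product gives $|\widehat{\pi_\beta\mu}(\xi)|^2 \le (1-cp_*^2 r^{4qk}/225)^{N/i_0} =: e^{-\kappa N}$, with $\kappa=\kappa(i_0,q,k,p_*,r)>0$. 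Since the ring has Lebesgue measure $O(r^{-qkN})$, its contribution to $\int|\widehat{\pi_\beta\mu}|^2\,d\xi$ is $\lesssim r^{-qkN}e^{-\kappa N}$; choosing $q,k$ so that $r^{-qk}e^{-\kappa}<1$ makes the geometric sum over $N$ converge and forces $\widehat{\pi_\beta\mu}\in L^2(\mathbb R)$ almost surely. A Fubini argument over $\beta$, combined with continuity of $\beta\mapsto\widehat{\pi_\beta\mu}(\xi)$, finally upgrades ``a.s.\ for each $\beta\notin E$'' to ``a.s.\ for all $\beta\in[0,\pi)\setminus E$ simultaneously.''

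The main obstacle is the parameter balance in the last step: the per-block decay exponent $cp_*^2 r^{4qk}/225$ \emph{shrinks} as the block length $qk$ grows, whereas the volume factor $r^{-qk}$ grows, so the crude two-term estimate of the second paragraph is too weak for a direct matching. Overcoming this is where the hypothesis $\dim_H\mu=s>1$ must enter quantitatively: the bound $|P_n|^2\le 1 - cp_*^2\|\cdot\|^2$ needs to be upgraded by a large-deviation estimate on the full digit sum $\sum_j X_j^{(n)}e^{-\ri\omega\langle\cdot,a_j\rangle}$ whose effective exponent scales with $qk(s-1)$, reflecting the entropy gap between $\mu$ and one-dimensional Lebesgue. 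Proving this refinement and verifying that it is compatible with the Diophantine control provided by Lemma~\ref{cover} is the technical heart of the argument and the main departure from the deterministic argument of \cite[Theorem B]{SS}.
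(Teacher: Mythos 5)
There is a genuine gap, and it sits exactly where you flag your ``main obstacle'': the direct $L^2$ strategy cannot be completed with the estimates available, and the fix you sketch is not how the hypothesis $s>1$ actually enters. The two-digit bound $|P_n|\le 1-\rho$ (valid when the relevant phase is $\ge r^{2qk}/15$ away from $2\pi\mathbb{Z}$) only ever yields a \emph{small positive power} of decay for $\widehat{\pi_\beta\mu}$ --- the exponent $\log(1-\rho)/(2i_0qk\log r)$ is tiny and, as you correctly observe, degrades as $qk$ grows while the volume of the dyadic ring grows like $r^{-qkN}$. Polynomial Fourier decay with a small exponent is very far from $\widehat{\pi_\beta\mu}\in L^2$, and there is no large-deviation upgrade of the single-factor estimate whose exponent ``scales with $qk(s-1)$'': the condition $\dim_H\mu>1$ does not translate into strong Fourier decay of the projections (self-similar measures of large dimension can have arbitrarily slow Fourier decay). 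So the last step of your plan is not a technical refinement to be filled in later; it is the point at which the approach fails.

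The paper circumvents exactly this by never asking for $L^2$ decay of the whole product. It splits the Fourier product into two infinite subproducts, $\widehat\mu=\widehat{\mu_{q,k}}\cdot\widehat{\eta_{q,k}}$, i.e.\ writes $\mu=\mu_{q,k}*\eta_{q,k}$, where $\eta_{q,k}$ collects only every $k$-th block of $q$-fold compositions and $\mu_{q,k}$ is again a random self-similar measure of the same class with $\dim_H\mu_{q,k}=\frac{k-1}{k}s>1$ for large $k$. The hypothesis $s>1$ enters only through the projection theorem (Theorem \ref{proj}, the analogue of \cite{FJ}), which gives $\dim_H\pi_\beta\mu_{q,k}=1$ for \emph{all} $\beta$ almost surely; the Diophantine input from Lemma \ref{cover}, combined with the law of large numbers for the events that a block contains the two good digits, gives only \emph{positive Fourier dimension} of $\pi_\beta\eta_{q,k}$ --- which is all the weak decay estimate can deliver. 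The conclusion then follows from \cite[Lemma 4.3]{SS}: a convolution of a measure of full dimension with one of positive Fourier dimension is absolutely continuous. Note also that Lemma \ref{cover} is calibrated to phases stepping in increments of $qk\theta$, so it only controls every $k$-th block, consistent with extracting decay from $\eta_{q,k}$ alone; your plan applies it to every factor, which does not match the lemma. Your first two paragraphs (the product representation, the two-digit factor bound, and the role of $E_{q,k}(\delta,N)$) are sound and coincide with the paper's treatment of $\eta_{q,k}$, but to complete the proof you must replace the direct $L^2$ summation by the convolution decomposition and the appeal to the projection theorem.
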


\begin{proof}
We write $\mbox{ang}(z)$ for the angle between the line containing $\{0,z\}$ and the $x$-axis. For $i,j\in \Lambda$ let $E_{i,j}$ be the set given by Lemma \ref{cover} for the ratio $r$ and angle $\theta$ in the IFS \eqref{mes1} with $\gamma=\mbox{ang} (a_{i}-a_{j})+\theta$ and $b=|a_{i}-a_{j}|$. Let $E=\cup_{i,j\in \Lambda} E_{i,j}$. Then $\dim_H E = 0$; we will show that the projected measures $\pi_\beta \mu$ are absolutely continous when $\beta \in [0,\pi)\setminus E$.

With $\mu=\Phi\nu$ as stated, we may, by \eqref{uni},  choose $i,j\in \Lambda$ with $|a_i-a_j|>0$ such that 
\begin{equation}\label{wij}
\mathbb{P}(X_{i}, X_{j}\ge p_*):=p>0;
\end{equation}
these $i$ and $j$ will remain fixed throughout the proof.

For each $q\ge 1$, we may regard the attractor  $K$ of the IFS \eqref{mes1} as the attractor of the iterated IFS
\[
{\mathcal I}_q := \{f_\bi:=f_{i_1}\cdots f_{i_q}\equiv r^qR_{q\theta}\cdot+a_\bi: \bi= i_1\ldots i_q \in \Lambda^q\},
\]
so that $K = \Phi_q( \Sigma_q)$ where
$\Sigma_q := \{\bi_1\bi_2\ldots: \bi_j \in \Lambda^q\}$ and $\Phi_q$ is the cannonical map. Let $\nu_q$ be the random self-similar measure of the form (\ref{ifsr})--(\ref{mes2}) with respect to
\[
\mathcal{X}_q=\Big\{X^{q,(n)}:=\Big(X^{q,(n)}_{\bi}\equiv \prod_{l=1}^qX_{i_l}^{(nq-q+l)}\Big)_{\bi=i_1\ldots i_q\in \Lambda^q}\Big\}_{n\ge 1}.
\]
Then  $\mu=\Phi_q\nu_q$ for all $q\ge 1$. Note that $\mu$ satisfies
\begin{equation}\label{sss}
\mu=\sum_{\bi \in \Lambda^q} X^{q,(1)}_\bi f_\bi\mu^{q,(1)},
\end{equation}
where $\mu^{q,(1)}$ is the copy of $\mu$ generated by $\{X^{q,(n+1)}\}_{n\in \mathbb{N}}$. In terms of Fourier transforms, writing $T_q=r^qR_{q\theta}$, equation \eqref{sss} yields that for $\xi\in\mathbb{R}^2$,
\begin{equation}\label{fss}
\widehat{\mu}(\xi)=\sum_{\bi\in \Lambda^q} X^{q,(1)}_\bi \mathrm{e}^{\ri \pi \langle a_\bi,\xi \rangle} \widehat{T_q\mu^{q,(1)}}(\xi ).
\end{equation}
Iterating \eqref{fss} and taking the limit, 
\begin{equation}\label{fssi}
\widehat{\mu}(\xi)=\prod_{n=0}^\infty \Psi_n^q(\xi),
\end{equation}
where, for $n\ge 0$,
\[
\Psi_n^q(\xi)=\sum_{\bi\in\Lambda^q} X^{q,(n+1)}_\bi \mathrm{e}^{\ri\pi \langle T_q^na_\bi,\xi \rangle}.
\]

From \eqref{fssi}, for $q\ge 1$ and $k\ge 2$, we can write $\mu$ as a convolution of two measures $\mu_{q,k}*\eta_{q,k}$, where
\[
\widehat{\mu_{q,k}}(\xi)=\prod_{ k{\not\hspace{0.07cm}\mid}\,n+1} \Psi_n^q(\xi) \ \text{ and } \ \widehat{\eta_{q,k}}(\xi)=\prod_{k\mid n+1} \Psi_n^q(\xi).
\]
Notice that $\mu_{q,k}$ is within the class of random self-similar measures of the form (\ref{ifsr})--(\ref{mes2}); indeed it has the same law as the random self-similar measure with respect to the IFS
\[
\{T_q^k\cdot+f_{\bi_1}\cdots f_{\bi_{k-1}}((0,0))\}_{ \bi_1\ldots \bi_{k-1}\in (\Lambda^q)^{k-1}}
\]
and the random vector
\[
\{X^{q,(1)}_{\bi_1}\cdots X^{q,(k-1)}_{\bi_{k-1}}\}_{\bi_1\ldots \bi_{k-1}\in (\Lambda^q)^{k-1} }.
\]
Thus, almost surely, $\dim_H \mu_{q,k}=\frac{k-1}{k}\dim_H \mu= \frac{k-1}{k}s >1$ by  \eqref{dim>1} and for   some sufficiently large $k$ which we fix for the remainder of the proof. Applying Theorem \ref{proj} we can find a set $\Omega_1$ with $\mathbb{P}(\Omega_1)=1$ such that, for all $\omega\in \Omega_1$, for all $\beta\in[0,\pi)$, $q\ge 1$,
\begin{equation}\label{omega1}
\dim_H \pi_{\beta} \mu_{q,k}=1.
\end{equation}

The rest of the proof estimates the Fourier transform of $ \pi_{\beta}\eta_{q,k}$ using Lemma \ref{cover}. From \eqref{wij}, for $q\ge1$ and $n\ge 0$ the event
\[
A_{q,n}=\{X_{i}^{(qn+h)}, X_{j}^{(qn+h)}\ge p_* \text{ for some } h=0,\ldots,q-1\}
\]
has probability $\mathbb{P}(A_{q,n})=1-(1-p)^q$. Since $\{\chi_{A_{q,nk}}\}_{n\ge 0}$ are i.i.d. random variables for all $q\ge 1$, by the strong law of large numbers we can find a set $\Omega_2$ with $\mathbb{P}(\Omega_2)=1$ such that for all $\omega\in \Omega_2$, for all $q\ge 1$,
\begin{equation}\label{omega2}
\lim_{N\to\infty} \frac{1}{N}\sum_{n=0}^N \chi_{A_{q,nk}}(\omega)=1-(1-p)^q.
\end{equation}
By \eqref{uni} we may also find a set $\Omega_3$ with $\mathbb{P}(\Omega_3)=1$ such that for all $n\ge 1$,
\begin{equation}\label{omega3}
\text{there exists } \ell \in \Lambda \text{ such that } X_\ell^{(n)}\ge p_*.
\end{equation}

 Take $\omega\in \Omega_1\cap \Omega_2\cap \Omega_3$. The rest of the proof will be deterministic.

Let $\beta\in [0,\pi)\setminus E$. By Lemma \ref{cover} there exists  $i_0=i_0(\beta)$ such that for all $q\ge 1$ there exists $N_0=N_0(\beta,q)$ such that $\beta\not\in E_{q,k}(1/i_0,N)$ for all $N\ge N_0$. In other words, for all $N\ge N_0$,
\begin{equation}\label{lb}
\max_{\tau\in[1,r^{-qk}]}\frac{1}{N}\#\Big\{n\in [N]:\|b \tau r^{q-qk(N-n)}\cos(\beta+\gamma-nqk\theta)\|> \frac{r^{2qk}}{15} \Big\}\geq \frac{1}{i_0},
\end{equation}
where $\gamma=\mbox{ang} (a_{i}-a_{j})+\theta$ and $b=|a_{i}-a_{j}|$. Take $q$ large enough so that $(1-p)^q<1/4i_0$. We show, in a similar manner to \cite[Proposition 3.3]{SS}, that the projected measure $\pi_\beta \eta_{q,k}$ has positive Fourier dimension. (Recall that the Fourier dimension of a measure $\lambda$ is the supremum of $\sigma$ such that $\widehat{\lambda}(\xi) = O(|\xi|^{-\sigma/2})$.)

Writing  $w_\beta=(\cos \beta,\sin \beta)$ as before and applying the formula
\[
\widehat{\pi_\beta \lambda}(t)=\widehat{\lambda}(t w_\beta)\quad (t \in \mathbb{R})
\]
for the Fourier transform of the projection of a measure $\lambda$ on $\mathbb{R}^2$, we obtain
\[
\widehat{\pi_\beta \eta_{q,k}}(t)=\prod_{n=1}^\infty \Psi_{kn-1}^q(t w_\beta).
\]
By \eqref{omega2}, we can find an integer $N_1$ such that for all $N\ge N_1$,
\begin{equation}\label{omega2'}
\sum_{n=0}^N \chi_{A_{q,nk}}(\omega)\ge N(1-2(1-p)^q)\ge N(1-1/2i_0).
\end{equation}
We claim that if $\chi_{A_{q,nk}}(\omega)=1$, then there exist distinct $\bi_1,\bi_2\in \Lambda^q$ such that
\begin{equation}\label{bi12}
X^{q,(kn)}_{\bi_1}, X^{q,(kn)}_{\bi_2}\ge (p_*)^q;\ \ \  \mathrm{ang} (a_{\bi_1}-a_{\bi_2})=\mbox{ang} (a_{i}-a_{j}); \ \ \ |a_{\bi_1}-a_{\bi_2}|=br^{q-1}.
\end{equation}
To see this, by \eqref{omega3} we can find $i_1,\ldots, i_q\in \Lambda$ such that  $X_{i_l}^{(qn+l)}\ge p_*$ for all $l=0,\ldots,q-1$.
Also $X_{i}^{(qkn+h)}, X_{j}^{(qkn+h)}\ge p_*$ for some $h\in\{0,\ldots,q-1\}$ since $\chi_{A_{q,kn}}(\omega)=1$.  
Then it is easy to check that $\bi_1=i_1\ldots i_{h-1} i i_{h+1}\ldots i_q$ and $\bi_2=i_1\ldots i_{h-1} j i_{h+1}\ldots i_q$ satisfy \eqref{bi12}. Hence for all $n\ge 0$ such that $\chi_{A_{q,nk}}(\omega)=1$ we can write, for some $d_0, d_\bi\in \mathbb{R}$,
\begin{eqnarray}\label{psisuma}
\Psi_{nk}^q(tw_\beta)&=& \sum_{\bi\in\Lambda^q} X^{q,(nk)}_\bi \mathrm{e}^{\ri\pi \langle T_q^{nk-1}a_\bi,tw_\beta \rangle}\nonumber\\
&=&
\mathrm{e}^{\ri\pi d_0}\Big(X^{q,(nk)}_{\bi_1}+X^{q,(nk)}_{\bi_2} \mathrm{e}^{\ri\pi \langle T_q^{nk-1}(a_{\bi_2}-a_{\bi_1}), tw_\beta \rangle}+\sum_{\bi \neq \bi_1,\bi_2}X^{q,(nk)}_{\bi} \mathrm{e}^{\ri\pi d_{\bi}}   \Big).\nonumber
\end{eqnarray}

Let $ t=\tau (r^{-qk})^N$, where $\tau\in[1,r^{-qk}]$ and $N\ge N_2:= \max\{N_0, N_1\}$, where $N_0=N_0(\beta,q)$ is given for \eqref{lb}. Note that $\mathrm{ang} (a_{\bi_1}-a_{\bi_2})=\gamma-\theta$. Then
\begin{eqnarray*}
 \langle T_q^{nk-1}(a_{\bi_2}-a_{\bi_1}), tw_\beta \rangle 
&=& b r^{q-1}t \langle T_q^{nk-1}w_{-\gamma+\theta}, w_\beta \rangle\\
&=& b \tau r^{q-qk(N-n)}\cos(\beta+\gamma -nqk\theta).
\end{eqnarray*}
Since $\sum_{\bi\in\Lambda^q} X^{q,(nk)}_{\bi}= 1$ and $X^{q,(nk)}_{\bi_1},X^{q,(nk)}_{\bi_2} \geq (p_*)^q$,  there is a constant $\rho=\rho(p_*,r,q,k)>0$ such that
$$|\Psi_{nk-1}^q(tw_\beta)|\le 1-\rho$$
whenever $\|b \tau r^{q-qk(N-n)}\cos(\beta+\gamma -nqk\theta)\|>r^{2qk}/15$. From \eqref{lb} and \eqref{omega2'} we deduce that
\[
\#\big\{n\in[N]: |\Psi_{nk-1}^q(tw_\beta)|\le 1-\rho \big\}\ge \Big(\frac{1}{i_0}-\frac{1}{2i_0}\Big)N=\frac{1}{2i_0}N.
\]
Hence
\[
|\widehat{\pi_\beta \eta_{q,k}}(t)| \le (1-\rho)^{ N/2i_0} \le t^{-\log(1-\rho)/(2i_0 qk\log r)},
\]
provided $t\ge r^{-qk(N_2+1)}$, so $\pi_\beta \eta_{q,k}$ has positive Fourier dimension.

It was shown in \cite[Lemma 4.3]{SS}  that the convolution of a measure of full Hausdorff dimension with one of positive Fourier dimension is absolutely continuous with respect to Lebesgue measure. Since $\dim_H \pi_\beta \mu_{q,k}=1$ by \eqref{omega1}, applying \cite[Lemma 4.3]{SS} to  $\pi_\beta \mu_{q,k}$ and $\pi_\beta \eta_{q,k}$ gives that $\pi_\beta\mu$ is absolutely continuous.
\end{proof}

We now apply Theorems \ref{prop1} and \ref{absss} to get weak dimension conservation for self-similar sets in $\mathbb{R}^2$ where the IFS consists of similarities with irrational rotations that are translates of each other and satisfy  OSC.

\begin{theorem}\label{thmhd}
Let $\theta/\pi$ be irrational and suppose that the IFS $\mathcal{I}=\{f_i=rR_\theta\cdot+a_i\}_{i=1}^m$ on $\mathbb{R}^2$, with $r>1/m$ and satisfying OSC, has attractor $K$, so that $s:= \dim_H K=-\log m/\log r >1$. 

Then there is a set $E\subset [0,\pi)$ with $\dim_H E =0$ such that for all $\beta\in [0,\pi)\setminus E$, for all $\epsilon\in(0,s-1)$,
$$\mathcal{L}^1\big\{x\in \pi_\beta(K): \dim_H \big(K\cap \pi_\beta^{-1}(x)\big)\ge s-1-\epsilon\big\}>0.$$
\end{theorem}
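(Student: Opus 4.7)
\noindent\textit{Proof plan.}
Set $\alpha=s-1-\epsilon$. The plan is to apply Proposition \ref{prop1} with this $\alpha$ to the random subset $\Sigma^\omega$ given by the symbolic support of a suitably chosen random self-similar measure $\mu$, with Theorem \ref{absss} supplying the absolute continuity of $\pi_\beta\mu$ needed for the hypothesis of Proposition \ref{prop1}.

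First I would iterate $\mathcal{I}$ to level $q$, obtaining $\mathcal{I}_q=\{f_\bi=r^qR_{q\theta}\cdot+a_\bi:\bi\in\Lambda^q\}$, which still has the form \eqref{mes1}, has attractor $K$, satisfies OSC, and has irrational rotation parameter $q\theta/\pi$. Because $mr^s=1$ gives $m^qr^{q\alpha}=r^{-q(1+\epsilon)}$, the interval $(r^{-q},r^{-q(1+\epsilon)}]$ has length tending to infinity with $q$, so I can pick a large $q$ together with an integer $k\ge 2$ in this interval. Define the random probability vector $X=(X_\bi)_{\bi\in\Lambda^q}$ by choosing a uniformly random $k$-subset $S\subset\Lambda^q$ and setting $X_\bi=k^{-1}\mathbf{1}_{\bi\in S}$, and build $\nu$ and $\mu=\Phi_q\nu$ from iid copies of $X$ as in \eqref{ifsr}--\eqref{mes2}. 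Then $X$ always has exactly $k\ge 2$ entries equal to $1/k$, verifying \eqref{uni} with $p_*=1/k$; and a short direct computation gives $\dim_H\mu=\log k/(q\log(1/r))>1$ almost surely since $k>r^{-q}$, verifying \eqref{dim>1}. Theorem \ref{absss} applied to $\mathcal{I}_q$ and $\mu$ then furnishes a set $E_\epsilon\subset[0,\pi)$ with $\dim_H E_\epsilon=0$ such that almost surely $\pi_\beta\mu$ is absolutely continuous with respect to $\mathcal{L}^1$ for every $\beta\in[0,\pi)\setminus E_\epsilon$.

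Next, letting $\Sigma^\omega=\mathrm{supp}(\nu)\subset\Sigma_q$, for a level-$l$ word $\bi$ in the iterated coding I compute $\mathbb{P}(\bi\in\Sigma_*^\omega)=(k/m^q)^l\le r^{q\alpha l}$; since $\bi\in\Lambda_\rho$ forces $r^{ql}\asymp\rho$, this delivers the $\alpha$-random bound \eqref{prob}. For $\beta\notin E_\epsilon$, the absolutely continuous measure $\pi_\beta\mu$ has mass $1$, so its support---contained in $\pi_\beta(\Phi_q(\Sigma^\omega))$---has positive $\mathcal{L}^1$-measure almost surely, meeting the hypothesis of Proposition \ref{prop1} with $A=\Sigma_q$. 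The proposition then yields $\mathcal{L}^1\{x\in\pi_\beta(K):\dim_H(K\cap\pi_\beta^{-1}(x))\ge\alpha\}>0$ for every $\beta\notin E_\epsilon$. Finally I take $E=\bigcup_{n\ge 1}E_{1/n}$, which still has $\dim_H E=0$, and observe that monotonicity in $\epsilon$ transfers the conclusion to every $\epsilon\in(0,s-1)$ simultaneously.

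The main obstacle is calibrating the random measure $\mu$ so that it simultaneously (i) has Hausdorff dimension strictly above $1$ as required by Theorem \ref{absss}, and (ii) has symbolic support forming an $\alpha$-random subset for the prescribed $\alpha=s-1-\epsilon$ demanded by Proposition \ref{prop1}. Iterating $\mathcal{I}$ to a large $q$ is precisely what enlarges the admissible window for $k$ enough to squeeze in an integer, and this iteration step is essential---the un-iterated IFS generically admits no such $k$ when $\epsilon$ is small.
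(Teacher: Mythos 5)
Your proposal is correct and follows essentially the same route as the paper: iterate the IFS to level $q$, construct a random self-similar measure of the form \eqref{ifsr}--\eqref{mes2} whose symbolic support is an $(s-1-\epsilon)$-random subset and whose dimension exceeds $1$, and then combine Theorem \ref{absss} with Proposition \ref{prop1}. The only (harmless) deviation is your choice of random subset --- a uniformly random $k$-subset of $\Lambda^q$ with $k$ an integer in $(r^{-q},\,m^qr^{q(s-1-\epsilon)}]$, rather than the paper's Bernoulli selection with two forced symbols --- which makes $\dim_H\mu$ deterministic and avoids the paper's Chebyshev estimate for $\mathbb{E}(\log\#S_q)$, at the cost of your pigeonhole observation that such an integer exists once $q$ is large.
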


\begin{proof}
For each integer $q>\log 2/-\log r$ we may regard $K$ as the attractor of the IFS ${\mathcal I}_q := \{f_{i_1}\cdots f_{i_q}: 1\leq i_1,\ldots,i_q \leq m\}$ so that $K = \Phi_q( \Sigma_q)$ where
$\Sigma_q := \{\bi_1\bi_2\ldots: \bi_j \in \Lambda^q\}$ and $\Phi_q$ is the cannonical map. Let $E_q\subset [0,\pi)$ be the set with $\dim_H E_q= 0$ given by that Theorem \ref{absss} for the IFS $\mathcal{I}_q$. Take $E=\cup_{q>\log 2/-\log r} E_q$ so that $\dim_H E=0$.

Now fix $\epsilon\in(0,s-1)$. Let  $q>\log 2/-\log r$ be an integer to be specified later. Let
\begin{equation}\label{probdef}
p_{q}: =\Big(r^{q(s-1-\epsilon)}-\frac{2}{m^q}\Big)\frac{m^q}{m^q-2}
= \frac{m^{q(1+\epsilon)/s}-2}{m^q-2} \in (0,1),
\end{equation}
since $r^s = m^{-1}$ and $2<r^{-q}= m^{q/s}$. Let $S_{q}$ be a random subset of $\Lambda^q$ defined as follows. First choose two different symbols from $\Lambda^q$ with uniform probability, then select each of the remaining $m^q -2$ symbols with probability $p_{q}$, all actions being independent; in this way  $S_{q}$ always contains at least two symbols. Moreover, for each $\mathbf{i}\in \Lambda^q$,
\begin{align*}
\mathbb{P}(\mathbf{i}\in S_{q})&= \frac{2}{m^q}+\frac{m^q-2}{m^q}p_{q} \\
&= r^{q(s-1-\epsilon)}.
\end{align*}
Let $\{S_{q}^{(k)}:k\in\mathbb{N}\}$ be a sequence of independent copies of $S_{q}$. Then the set
\[
\Sigma_{q}^\omega:=S_{q}^{(1)}\times S_{q}^{(2)}\times \cdots 
\] 
is an $\alpha$-random set, with $\alpha=\log r^{q(s-1-\epsilon)}/\log r^q = s-1-\epsilon$, witn $\Phi_q$ satisfying (1) and (2) at (\ref{lambdar}) and (\ref{overlap}). 

Define a random vector $X_{q}$ in a uniform manner, that is,
$$X_{q} =\bigg\{\frac{\chi_{(\bi \in S_{q})}}{ \#S_{q}}\bigg\}_{\bi \in S_{q}};$$
then $(X_{q})_\bi \geq 1/m^q := p_*$  for at least two $\bi \in S_{q}$.
Let $\{X_{q}^{(k)}:k\in\mathbb{N}\}$ be independent copies of $X_{q}$ which are supported by $S_{q}^{(k)}$. These random vectors define  a random measure $\nu_{q}$ on $\Sigma_q$ of the form described in \eqref{ifsr} and \eqref{mes2} at the start of this section. Then $\nu_q$ has support $\Sigma^\omega_q$, and $\Phi_q\nu_{q}$ has support $K^\omega = \Phi_q(\Sigma^\omega_q)$. 
From the strong law of large numbers, and using OSC when mapping the measure under $\Phi_q$, almost surely
\[
\dim_H \Phi_q\nu_{q}=\frac{\mathbb{E}(\log \# S_{q})}{-\log r^q}.
\]
Write $\mbox{Bin}(n,p)$ to denote the binomial distribution with $n$ points and probability $p$. Then   
\[
\mathbb{E}(\log \# S_{q})=\mathbb{E}\big(\log[\mathrm{Bin}(m^q-2,p_{q})+2]\big)= \log(m^{q(1+\epsilon)/s})-o(1)
\]
as $q \to \infty$, on using (\ref{probdef}) to express $p_{q}$ in terms of $m$ together with a simple application of Chebyshev's inequality. Thus
\[
\dim_H \Phi_q\nu_{q}=\frac{\log(m^{q(1+\epsilon)/s})-o(1)}{-\log m^{-q/s}}=1+\epsilon-o(q^{-1})>1
\]
provided we now choose $q$ sufficiently large. 

From Theorem \ref{absss}, almost surely for all $\beta\in [0,\pi)\setminus E \subset [0,\pi)\setminus E_q$, the projected measure $\pi_\beta \Phi_q\nu_{q}$ is absolutely continuous with respect to Lebesgue measure, so $\mathcal{L}^1(\pi_\beta(K^\omega))>0$. The conclusion follows from Proposition \ref{prop1}, taking $A= \Sigma$, $K= \Phi(\Sigma)$and $\alpha = s-1-\epsilon$.
\end{proof}

We now extend Theorem \ref{thmhd} to general sets of similarities using a technique of Peres and Shmerkin \cite[Proposition 6]{PS}. This allows us to reduce a general plane IFS to one where the similarities are mutual translates with the attractor a subset of that of the original IFS and of arbitraily close dimension to which we may apply Theorem \ref{thmhd}.

\begin{proposition}\label{samerots}
Let 
$\mathcal{I}=\{f_i=r_iR_{\theta_i}\cdot+a_i\}_{i=1}^m$
be an IFS on $\mathbb{R}^2$ satisfying OSC with attractor $K$. For all 
$\epsilon >0$ there is an IFS
$\mathcal{I}_\epsilon$, satisfying SSC and formed by a collection of compositions of maps from  $\mathcal{I}$, such that all the maps in $\mathcal{I}_\epsilon$ have the same rotation $R_\theta$ for some angle $\theta$ and the same contraction ratio $0<r<1$, and with attractor $K_\epsilon \subset K$ such that $\dim_H K_\epsilon > \dim_H K - \epsilon$.

Moreover, if $\mathcal{I}$ has dense rotations then we may take $\theta/\pi$ to be irrational.
\end{proposition}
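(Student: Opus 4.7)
The plan is to exploit the fact that $n$-fold compositions of the same \emph{combinatorial type} automatically share both contraction ratio and rotation angle. For $\tau=(n_1,\dots,n_m)$ with $n_i\in\mathbb{Z}_{\ge 0}$ and $\sum_i n_i=n$, write $\Lambda^n_\tau$ for the set of words $\bi=i_1\cdots i_n$ in which each letter $i$ appears exactly $n_i$ times. A direct calculation shows that every $\bi\in\Lambda^n_\tau$ yields a similarity $f_\bi=f_{i_1}\circ\cdots\circ f_{i_n}$ with contraction $r(\tau)=\prod_i r_i^{n_i}$ and rotation $R_{\theta(\tau)}$ where $\theta(\tau)=\sum_i n_i\theta_i\pmod{2\pi}$; both quantities depend only on $\tau$. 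The translational parts, in contrast, vary with the letter order, giving $\#\Lambda^n_\tau=\binom{n}{n_1,\ldots,n_m}$ distinct similarities with a common scale and orientation.

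Next I optimize the type to approach $s=\dim_H K$. Let $p_i=r_i^s$ (so $\sum_i p_i=1$ by Moran's equation) and take $n_i=\lfloor p_i n\rfloor$, adjusting one coordinate by a bounded amount to enforce $\sum n_i=n$. Stirling's formula then yields $\log\#\Lambda^n_\tau=nH(p)+O(\log n)$ with $H(p)=-\sum_i p_i\log p_i$, and using the identity $\log p_i=s\log r_i$,
\[
\frac{\log\#\Lambda^n_\tau}{-\log r(\tau)}=\frac{nH(p)+O(\log n)}{-n\sum_i p_i\log r_i+O(1)}\longrightarrow\frac{-s\sum_i p_i\log r_i}{-\sum_i p_i\log r_i}=s\quad\text{as }n\to\infty,
\]
so the attractor of $\{f_\bi:\bi\in\Lambda^n_\tau\}$, provided the resulting IFS satisfies SSC, has Hausdorff dimension tending to $s$.

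To upgrade OSC to SSC, I use Schief's theorem to find $x\in K\cap U$, where $U$ is the OSC open set; then for $N$ large I choose a word $\mathbf{j}_0\in\Lambda^N$ with $f_{\mathbf{j}_0}(\overline{U})\subset U$, which is possible because $f_{\mathbf{j}|_N}(\overline{U})$ shrinks to a point in $K\cap U$ for any symbolic address $\mathbf{j}$ of $x$. Define
\[
\mathcal{I}_\epsilon=\bigl\{f_{\mathbf{j}_0}\circ f_\bi\circ f_{\mathbf{j}_0}:\bi\in\Lambda^n_\tau\bigr\}.
\]
By OSC of $\mathcal{I}$ at length $N+n$, the open sets $f_{\mathbf{j}_0\bi}(U)$ for distinct $\bi\in\Lambda^n_\tau$ are pairwise disjoint; since $f_{\mathbf{j}_0\bi\mathbf{j}_0}(K)\subset f_{\mathbf{j}_0\bi}(f_{\mathbf{j}_0}(\overline{U}))\subset f_{\mathbf{j}_0\bi}(U)$, the cylinders $\{f_{\mathbf{j}_0\bi\mathbf{j}_0}(K):\bi\in\Lambda^n_\tau\}$ are pairwise disjoint compact sets, so $\mathcal{I}_\epsilon$ satisfies SSC. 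Every map in $\mathcal{I}_\epsilon$ has the same contraction $r=r_{\mathbf{j}_0}^2\,r(\tau)<1$ and the same rotation $R_\theta$ with $\theta=2\theta_{\mathbf{j}_0}+\theta(\tau)$, and the attractor $K_\epsilon\subset K$ has Hausdorff dimension $\log\#\Lambda^n_\tau/(-\log r)$, which exceeds $s-\epsilon$ once $n$ is sufficiently large.

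For the dense-rotation claim, density of the group generated by $R_{\theta_1},\dots,R_{\theta_m}$ in $SO(2,\mathbb{R})$ implies that the additive subgroup of $\mathbb{R}/2\pi\mathbb{Z}$ generated by $\theta_1,\dots,\theta_m$ is infinite, and hence some $\theta_j/\pi$ is irrational. The resulting angle $\theta=2\theta_{\mathbf{j}_0}+\sum_i n_i\theta_i$ is affine in $n_j$ with slope $\theta_j$; replacing $n_j$ by $n_j\pm 1$ (and compensating on another coordinate to preserve $\sum_i n_i=n$) forces $\theta/\pi$ to be irrational, at the cost of only an $O(1/n)$ change in the type that does not affect the dimension estimate. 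The principal technical obstacle is the SSC upgrade: one must produce the `strictly interior' word $\mathbf{j}_0$ with $f_{\mathbf{j}_0}(\overline{U})\subset U$, and here Schief's theorem is essential; once such a $\mathbf{j}_0$ is in hand, the geometric nesting that delivers SSC is routine.
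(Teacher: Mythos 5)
Your core combinatorial device---restricting to words of a fixed type $\tau=(n_1,\dots,n_m)$ so that all the composed maps share the contraction $\prod_i r_i^{n_i}$ and the rotation $R_{\sum_i n_i\theta_i}$, and then using Stirling with $n_i\approx r_i^{s}n$ to make the dimension loss $o(1)$---is exactly the Peres--Shmerkin argument that the paper invokes via \cite[Proposition 6]{PS}. Where you genuinely diverge is the reduction from OSC to SSC. The paper first replaces $\mathcal{I}$ by an SSC sub-IFS of almost full dimension (citing \cite{Orp}) and only then forms the fixed-type compositions; you instead keep the OSC system and sandwich each fixed-type word between two copies of an ``interior'' word $\mathbf{j}_0$ with $f_{\mathbf{j}_0}(\overline{U})\subset U$, obtained from Schief's theorem (strong OSC). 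The OSC disjointness of the sets $f_{\mathbf{j}_0\bi}(U)$ for distinct $\bi\in\Lambda^n_\tau$ then gives SSC for the sandwiched family, and since $\mathbf{j}_0$ is fixed while $n\to\infty$, the extra factor $r_{\mathbf{j}_0}^2$ in the contraction (and the extra $2\theta_{\mathbf{j}_0}$ in the rotation, which you correctly track) costs only $o(1)$ in dimension. This is a valid, self-contained alternative to the paper's appeal to an external SSC-approximation result.

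The one step that does not work as written is the adjustment in the dense-rotations case. You assert that $\theta$ is affine in $n_j$ with slope $\theta_j$, so changing $n_j$ by $1$ forces $\theta/\pi$ irrational; but because you simultaneously compensate on another coordinate $k$ to preserve $\sum_i n_i=n$, the actual increment is $\theta_j-\theta_k$, which can be a rational multiple of $\pi$ even though $\theta_j/\pi$ is irrational (for instance $\theta_k=\theta_j$). The repair is easy. Either drop the constraint $\sum_i n_i = n$ and simply delete one occurrence of $f_j$, as the paper does---words of type $(n_1,\dots,n_j-1,\dots,n_m)$ are equally admissible, and the multinomial count changes only by the factor $n_j/n$, which is bounded away from $0$---or note that a suitable compensating coordinate always exists: if $\theta/\pi$ were rational and every $(\theta_j-\theta_k)/\pi$ were also rational, then writing $\theta_k=\theta_j+\pi q_k$ with $q_k\in\mathbb{Q}$ would give $\theta=(2N+n)\theta_j+\pi\cdot(\text{rational})$, contradicting the irrationality of $\theta_j/\pi$. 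With either fix your argument goes through.
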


\begin{proof}
First we may assume that $\mathcal{I}$ satisfies SSC, since there is an IFS formed by compositions of the maps in $\mathcal{I}$ that satisfies SSC with attractor a subset of $K$ and with Hausdorff dimension arbitrarily close to that of $K$, see, for example, \cite{Orp}. 

Next, as in the proof of \cite[Proposition 6]{PS}, we may find integers $n_1,\ldots,n_m$ such that the IFS $\mathcal{I}_\epsilon$ formed by all those compositions of the maps of  $\mathcal{I}$ taken in any order such that $f_i$ occurs $n_i$ times for each $i = 1, \ldots,m$, has an attractor $K_\epsilon\subset K$ with $\dim_H K_\epsilon> \dim_H K - \epsilon$. All the maps in $\mathcal{I}_\epsilon$ have rotation $R_\theta= R_{n_1\theta_1+\cdots+n_m\theta_m}$ and contraction ratio $r= r_1^{n_1}\cdots r_m^{n_m}$. 

Now suppose that  $\mathcal{I}$ has dense rotations. If $(n_1\theta_1+\cdots+n_m\theta_m)/\pi$ is irrational then there is nothing further to prove. Otherwise, at least one of the $\theta_i$, say $\theta_1$, is an irrational multiple of $\pi$. By a slight modification of the proof of \cite[Proposition 6]{PS} we may conclude  that the attractor of the IFS $\mathcal{I}'_\epsilon$ formed by the compositions of the maps of  $\mathcal{I}$  such that $f_1$ occurs $n_1 -1$ times and $f_i$ occurs $n_i$ times for $i = 2, \ldots,m$, with attractor $K'_\epsilon\subset K$  has $\dim_H K'_\epsilon > \dim_H K - \epsilon$. (We just note in \cite[Proposition 6]{PS} that the number of paths ending at a neighboring lattice point to $v$ is comparable to the number of paths ending at $v$.) Then 
$((n_1-1)\theta_1+\cdots+n_m\theta_m)/\pi$ is irrational so the conclusion holds for $\mathcal{I}'_\epsilon$.
\end{proof}

\begin{theorem}\label{thmhdgen}
Let 
$$\mathcal{I}=\{f_i=r_iR_{\theta_i}\cdot+a_i\}_{i=1}^m$$
be an IFS on $\mathbb{R}^2$ with dense rotations satisfying OSC, with attractor $K$ and with $s =\dim_H K >1$, where $s$ is given by $\sum_{i=1}^m r_i^s=1$. 
Then there is a set $E\subset [0,\pi)$ with $\dim_H E =0$ such that for all $\beta\in [0,\pi)\setminus E$, for all $\epsilon\in(0,s-1)$,
\begin{equation}\label{ineq1}
\mathcal{L}^1\big\{x\in \pi_\beta(K): \dim_H \big(K\cap \pi_\beta^{-1}(x)\big)\ge s-1-\epsilon\big\}>0.
\end{equation}
\end{theorem}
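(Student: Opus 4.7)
The plan is to reduce Theorem~\ref{thmhdgen} to Theorem~\ref{thmhd} via Proposition~\ref{samerots}: the latter lets us approximate $K$ from within by attractors of sub-iterated function systems whose maps share a common irrational rotation and common contraction ratio, and the former then supplies the weak dimension conservation statement for each such approximant. The conclusion for $K$ itself follows by monotonicity, and the exceptional directions accumulate into a countable union of sets of Hausdorff dimension $0$, which is still of dimension $0$.

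First I fix any integer $n$ with $1/n<s-1$. Applying Proposition~\ref{samerots} with parameter $1/(2n)$ produces an IFS $\mathcal{I}_n=\{r_n R_{\theta_n}\cdot+a_{i,n}\}_{i=1}^{m_n}$ that satisfies SSC, has $\theta_n/\pi$ irrational (by the second assertion of Proposition~\ref{samerots}, using that $\mathcal{I}$ has dense rotations), and whose attractor $K_n\subset K$ satisfies $s_n:=\dim_H K_n>s-1/(2n)$. Since $s_n>1+1/(2n)>1$ and $s_n=-\log m_n/\log r_n$, we get $r_n>m_n^{-1}$, so $\mathcal{I}_n$ meets all the hypotheses of Theorem~\ref{thmhd}. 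Applying that theorem to $\mathcal{I}_n$ yields a set $E_n\subset[0,\pi)$ with $\dim_H E_n=0$ such that, for every $\beta\in[0,\pi)\setminus E_n$,
$$\mathcal{L}^1\big\{x\in\pi_\beta(K_n):\dim_H(K_n\cap\pi_\beta^{-1}(x))\ge s_n-1-1/(2n)\big\}>0.$$
Since $K_n\subset K$ gives $\pi_\beta(K_n)\subset\pi_\beta(K)$ and $\dim_H(K_n\cap\pi_\beta^{-1}(x))\le\dim_H(K\cap\pi_\beta^{-1}(x))$, and since $s_n-1-1/(2n)>s-1-1/n$, this passes to
$$\mathcal{L}^1\big\{x\in\pi_\beta(K):\dim_H(K\cap\pi_\beta^{-1}(x))>s-1-1/n\big\}>0.$$

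Finally, set $E:=\bigcup_{n:\,1/n<s-1}E_n$; by countable stability of Hausdorff dimension, $\dim_H E=0$. For any $\beta\in[0,\pi)\setminus E$ and any $\epsilon\in(0,s-1)$, choose $n$ with $1/n\le\epsilon$; the display above then yields \eqref{ineq1}. There is no real conceptual obstacle once Proposition~\ref{samerots} and Theorem~\ref{thmhd} are in hand; the only care needed is in splitting the total deficit $\epsilon$ into a loss of $1/(2n)$ incurred by shrinking $K$ to $K_n$ and a further loss of $1/(2n)$ coming from the conclusion of Theorem~\ref{thmhd}, and in ensuring that the approximating IFSs $\mathcal{I}_n$ have \emph{irrational} common rotation angle, which is exactly what the second statement of Proposition~\ref{samerots} is designed to provide.
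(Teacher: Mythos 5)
Your proposal is correct and follows essentially the same route as the paper: approximate $K$ from inside by the attractor $K_n$ of the common-rotation, common-ratio sub-IFS supplied by Proposition~\ref{samerots}, apply Theorem~\ref{thmhd} to $K_n$, pass the conclusion to $K$ by monotonicity of sections and projections, and absorb the exceptional directions into a countable union of dimension-$0$ sets. The only (harmless) difference is bookkeeping — you split the loss as $1/(2n)+1/(2n)$ along the sequence $1/n$ where the paper uses $\epsilon/2+\epsilon/2$ along $2^{-n}$ — and you spell out the hypothesis checks (SSC implies OSC, $r_n>m_n^{-1}$, irrationality of $\theta_n/\pi$) slightly more explicitly than the paper does.
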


\begin{proof}
For each $\epsilon>0$, applying  Theorem \ref{thmhd} to the amended  IFS $\mathcal{I}_\epsilon$ with attractor $K_\epsilon$ given by Proposition \ref{samerots} (replacing $\epsilon$ by $\epsilon/2$ in both theorem and proposition),
there is a set $E_\epsilon \subset[0,\pi)$ with $\dim_H E_\epsilon = 0$, such that \eqref{ineq1} holds for all $\beta \in E_\epsilon$.
So that the set of exceptional $\beta$ does not depend on $\epsilon$, we let $E=\cup_{n=n_0}^\infty E_{2^{-n}}$, where $2^{-n_0} < s-1$, so that $\dim_H E= 0$.
\end{proof}

\section{Further remarks}
\setcounter{theorem}{0}

\noindent 1. A natural question is whether these results can be strengthened from `weak dimension conservation' to `dimension conservation', that is whether the `$\epsilon$' can be removed in  the conclusion of Proposition \ref{prop2},
and in Theorems \ref{dimconthm}, \ref{mandper},  \ref{thmhd} and \ref{thmhdgen}. 
\medskip

\noindent 2. Another natural question is whether, in Proposition \ref{prop2}, the condition on the projection of $B(\Phi[\bi])$ can be weakened, with a consequential weakening of the corresponding condition on the projections of $K$ in Theorem \ref{dimconthm}. 
Furthermore, can $\lbd$ of the sections be replaced by $\dim_H$  in  the conclusions of Proposition \ref{prop2} and Theorem \ref{dimconthm}?
An alternative approach would be to eliminate the exceptional set of directions in Theorem \ref{thmhd} and thus Theorem \ref{thmhdgen}.

This raises the question of whether the box-dimension and Hausdorff dimension of sections of self-similar set are `typically' equal for all, or perhaps `nearly all' directions. If $\lbd (K\cap L )=\dim_H (K\cap L )$ for every line $L$, or at least for a large set of lines, then one might be able to replace lower box dimension by Hausdorff dimension in the conclusion of  Theorem \ref{dimconthm}. There are plane self-similar sets defined by homotheties with at least some sections having distinct Hausdorff and lower box dimensions, for example for certain horizontal sections of the 1-dimensional Sierpi\'{n}ski triangle, that is the attractor of the plane IFS with maps $f_1(x,y) = (\frac{1}{3}x, \frac{1}{3}y),  f_2(x,y) = (\frac{1}{3}x+\frac{2}{3}, \frac{1}{3}y), f_3(x,y) = (\frac{1}{3}x, \frac{1}{3}y+\frac{2}{3})$ (we are grateful to Thomas Jordan for pointing out this example to us); see also \cite{BFS}. Is this possible for self-similar sets with dense rotations?
 \medskip

\noindent 3. Similar conclusions to Proposition \ref{prop2} and thus Theorem \ref{dimconthm}   might be expected for projections onto $k$-dimensional subspaces $V$ where $k\geq 2 $. However,  it seems hard to get an analogue of Lemma \ref{sub1} in this case. One would need to show that for any cube $I \subset V$ with $|I| \leq r$ there is a bounded number of points $x_i \in V$ with $N(x_i,r) \leq M$ such that if $N(x,r)\leq M$ for some $ x \in I$  then                            
 some $L_{x_i}$ intersects every set $B( \Phi[\bi])$ such that $\bi \in \Lambda_r$ that intersects $L_x$. (Here  $N(x,r)$ is the number of $B( \Phi[\bi])$ with $\bi \in \Lambda_r$ that intersect $L_x$, the $(d-k)$-plane through $x \in V$ and perpendicular to $V$.)
  \medskip
  
 4. Our results have been presented for self-similar sets defined by orientation-preserving similarities. It would be possible to extend them to allow some of the maps to be 
 orientation-reversing, for example by replacing an IFS by one formed by appropriate orientation-preserving compositions of the maps with little reduction in the dimension of the attractor, as in the proof of \cite[Proposition 6]{PS}.

 \bibliographystyle{abbrv}

\end{document}